\numberwithin{equation}{section}
\newcommand{\R}{\mathbb{R}}
\newcommand{\N}{\mathbb{N}}
\newtheorem{teo}{Theorem}[section]
\newtheorem{rem}[teo]{Remark}
\newtheorem{lemma}[teo]{Lemma}
\renewcommand{\div}{\operatorname{div}}
\begin{document}

\author{Nunzia Gavitone}
\address[N.\,Gavitone]{Dipartimento di Matematica e Applicazioni ``Renato Caccioppoli'',
Universit\`a degli Studi di Napoli Federico II, Via Cintia, Monte S.\ Angelo,
80126 Napoli, Italy}
\email{nunzia.gavitone@unina.it}
\author{Riccardo Molinarolo}
\address[R.\,Molinarolo]{Dipartimento di Informatica, Universit\`a degli Studi di Verona, Strada le Grazie 15, 37134 Verona, Italy}
\email{riccardo.molinarolo@univr.it}
\thanks{\today}
\title[On the Serrin's problem with Robin boundary conditions]{On the Serrin's problem with Robin boundary conditions}

\begin{abstract}
Let $\Omega \subset \R^N$, $N\ge 2$, be an open, connected, bounded set with $C^2$ boundary. In this paper we consider the torsion problem with Robin boundary conditions and we study the symmetry of the solutions when  suitable extra conditions are imposed on the boundary of $\Omega$. In particular, we prove the Serrin's rigidity result under suitable assumptions on the domain and on the Robin parameter.
\end{abstract}

\maketitle

\noindent
{\bf Keywords:}  Overdetermined problems, torsional rigidity, $P$-function, shape derivative, Robin torsion problem.\par

\bigskip

\noindent   
{{\bf 2020 Mathematics Subject Classification:}} 35N25, 35A23, 35B50, 53A10.
\bigskip

\section{Introduction}
Let $\Omega \subset \R^N$, $N\ge 2$, be an open, connected, bounded subset with $C^2$ boundary and $\beta \in \mathbb \R\setminus \{0\}$. Let us consider the torsion problem with Robin boundary conditions, that is
\begin{equation}\label{PbR}
    \begin{cases}
    \Delta u = N &\text{in } \Omega,
    \\
    \dfrac{\partial u}{\partial \nu}+ \beta u = 0 &\text{on } \partial\Omega,
    \end{cases}
\end{equation}
where $\nu$ denotes the outer unit normal to the boundary. 
The existence of a unique solution to the problem \eqref{PbR} depends on the Robin parameter $\beta$. It is well-known, in fact,  that  if $\beta>0$ there exists a unique  solution $u$ to the problem \eqref{PbR} and it is smooth and negative.
In the case $\beta <0$, in \cite{bw} and \cite{bw_neg} the authors prove that the problem \eqref{PbR} has a unique solution $u$ if  $|\beta| \neq \mu_i(\Omega)$ for every $i \in \N_0$, where  
\[
0 = \mu_0(\Omega) < \mu_1(\Omega) \leq \mu_2(\Omega) \leq \, ... \, \leq \mu_i(\Omega) \leq \, ... \, \nearrow +\infty, 
\]
are the Steklov eigenvalues of the Laplacian. Let us observe that, for $\beta < 0$, the function $u$ could change sign (see Section \ref{sec rad tor}). 
In what follows, in order to have the existence of a unique solution $u$  to the problem \eqref{PbR}, we will always assume the following condition on the Robin parameter $\beta$:
\begin{equation}\label{beta condition}
    \beta \neq - \mu_i(\Omega) \quad \forall i \in \N_0,
\end{equation}
which is trivially verified when $\beta>0$.

Then we can consider the following quantity
\[
T_{\beta}(\Omega)=\frac{1}{N}\int_{\Omega}(-u)\,dx,  
\]
which is the so-called Robin torsional rigidity of $\Omega$. In recent years, the study of shape optimization problems related to $T_{\beta}(\Omega)$ under geometrical constraints has been the subject of extensive research by many mathematicians. In this order of idea, we remind the readers that the behavior of  $T_{\beta}(\Omega)$ under volume constraint and in particular the existence of optimal sets which minimizes or maximizes $T_{\beta}(\Omega)$ among the domains with fixed measure, depends on the Robin parameter $\beta$. In the case $\beta>0$, in \cite{bg} the authors showed the following result:
\begin{equation}
\label{bg pos}
T_{\beta}(\Omega)\le T_{\beta}(\Omega^{\sharp}),
\end{equation}
where $\Omega^{\sharp}$ is the ball having the same measure than $\Omega$.
The inequality \eqref{bg pos} asserts that, among all domains of given volume, the ball has the largest torsional rigidity and this generalizes the classical well-known result when Dirichlet boundary conditions are imposed (see \cite{polya,talenti}). 

In the case $\beta<0$, in \cite{bw_neg} the authors proved that, in the plane, if $|\beta|<\mu_1(\Omega)$, then
\begin{equation*}
T_{\beta}(\Omega^{\sharp})\le T_{\beta}(\Omega).
\end{equation*}

Then for $\beta<0$, in the plane the disk has the smallest torsional rigidity under volume constraint if $|\beta|< \mu_1(\Omega)$, while in higher dimension this result, up to our knowledge, is still open.  However, we recall that in \cite{bw}, in any dimension, the authors proved that for $|\beta|$ sufficiently small, the ball is a local maximum by using a shape derivative approach. To do that, the authors computed the first domain derivative of the torsion functional and, for any $\beta \in \R\setminus\{0\}$ such that \eqref{beta condition} holds, they proved that $\Omega$ is a critical domain under volume constraint if and only if it verifies the following extra condition on the boundary: 
\begin{equation}
\label{spr intro}
    |\nabla u|^2 +2Nu - 2\beta^2 u^2 + (N-1) \beta u^2 \mathcal{M}= C \quad \text{on } \partial\Omega,
\end{equation}
where $\mathcal{M}$ is the mean curvature of $\partial \Omega$ and $C$ is a constant.
Then, as observed in \cite{bw}, to characterize the critical domains of the functional $T_{\beta}$ under volume constraint, one could investigate the following overdetermined boundary value problem 
\begin{equation}\label{SpR intro}
        \begin{cases}
        \Delta u = N &\text{in } \Omega,
        \\
        \displaystyle \frac{\partial u}{\partial \nu}+ \beta u = 0 &\text{on } \partial\Omega,
        \\
        |\nabla u|^2 + 2Nu - 2\beta^2 u^2 + (N-1) \beta u^2 \mathcal{M} = C &\text{on } \partial\Omega,
      \end{cases}
\end{equation}
where $\nu $ denotes the outer unit normal to the boundary.
    
Taking into account the quoted results, this study should allow us to understand if there are critical domains for $T_{\beta}$ other than the ball. We observe that,  when $\beta$ goes to $+\infty$, that is in the Dirichlet case, \eqref{spr intro} reduces to (up to redefine the constant $C$)
\[
|\nabla u| = C \,\,\text{ on } \partial\Omega,
\]
and then \eqref{SpR intro} reduces to the celebrated Serrin's problem studied in \cite{serrin}. In that paper, Serrin proved the radial symmetry of  the solution, and then of the  domain,  by using the so-called moving plane technique.  We recall that in \cite{w},  Weinberger proved the same rigidity result by using the maximum principle for a suitable auxiliary function called P-function. Since these fundamental contributions, several alternative proofs and generalizations to linear and nonlinear operators  followed: without any claim of completeness, we mention for instance \cite{bnst,bgnt,bh,bk,cs,f,fgk,gnn,hp,gl,mmp,nt}. Finally, we recall that in the paper \cite{mp19} the authors obtained the rigidity result for the Serrin's problem by using a deep connection with the celebrated Alexandrov’s Soap Bubble Theorem. Among their results,  we recall that they also proved a rigidity result when the following extra boundary condition, involving the mean curvature of the domain, is imposed: 
\begin{equation}
\label{pogh intro}
\langle \nabla u, \nu \rangle \mathcal{M} = 1 \text{ on } \partial\Omega,
\end{equation}
where $\langle \cdot,\cdot \rangle$ denotes the standard Euclidean scalar product. 
We stress that in the Dirichlet case the following well-known Reilly formula  holds
\begin{equation*}
\Delta u= (N-1)\langle \nabla u,\nu\rangle \mathcal{M} + \langle (\nabla^2 u) \,\nu , \nu\rangle  \quad \text{ on } \partial \Omega,
\end{equation*}
and then \eqref{pogh intro} is equivalent to require the following alternative condition
\begin{equation}
\label{sec}
\displaystyle \langle \nabla^2 u \, \nu , \nu \rangle = 1 \quad \text{ on } \partial \Omega.
\end{equation}

The aim of this paper is to study some  overdetermined problems for the Robin torsion problem \eqref{PbR} in the spirit of \cite{mp19}.  Then we consider different kinds of extra conditions on the boundary as for instance  the natural Serrin-Robin overdetermined condition  \eqref{spr intro}, or conditions  of the type \eqref{pogh intro}  and \eqref{sec} that,  in the Robin setting, are not equivalent (see Section \ref{sec notation} for the general Reilly's identity). 
Our main results are contained in Section \ref{sec overdet problem}. Among them, 
as regards  problem \eqref{SpR intro}, denoting by $\kappa_i$ the $i^{th}$ principal curvature of $\partial \Omega$ ($\kappa_i$ are oriented so that convex sets have positive curvatures), and by 
\begin{equation}\label{eq kappa min intro}
    \kappa_{\min} = \min_{\partial \Omega} \min_{i=1,\dots,N-1}  \kappa_i(x),
\end{equation}
we obtain the following.
\begin{teo}[Rigidity for the Serrin's problem with Robin boundary conditions]
    Let $\Omega \subset \R^N$, $N\ge 2$, be an open, connected, bounded set with $C^2$ boundary. Let $\beta > 0$ and let $u \in C^2(\overline{\Omega})$ be a solution of
    \begin{equation*}
        \begin{cases}
        \Delta u = N &\text{in } \Omega,
        \\
        \displaystyle \frac{\partial u}{\partial \nu}+ \beta u = 0 &\text{on } \partial\Omega,
        \\
        |\nabla u|^2 + 2Nu - 2\beta^2 u^2 + (N-1) \beta u^2 \mathcal{M} = C &\text{on } \partial\Omega,
      \end{cases}
    \end{equation*}
    with 
    \begin{equation*}
        R = \frac{N |\Omega|}{|\partial\Omega|} \quad\text{and}\quad C = - R^2 - R \left(\frac{N+1}{\beta}\right).
    \end{equation*}
    Let $k_{\min}$ be as in \eqref{eq kappa min intro}. If
    \begin{equation}
    \label{sci}
    \beta + k_{\min} \ge 0,
    \end{equation}
    then $\Omega$ is a ball and $u$ is radially symmetric.
\end{teo}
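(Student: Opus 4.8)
The plan is to show that the Cauchy--Schwarz deficit
\begin{equation*}
D:=\int_\Omega\Bigl(|\nabla^2 u|^2-\tfrac{(\Delta u)^2}{N}\Bigr)\,dx=\int_\Omega\bigl(|\nabla^2 u|^2-N\bigr)\,dx
\end{equation*}
vanishes. Since $|\nabla^2 u|^2\ge (\Delta u)^2/N$ pointwise, one always has $D\ge 0$, and equality in the integrand forces $\nabla^2 u\equiv I$ on $\Omega$; hence everything reduces to proving the reverse inequality $D\le 0$. Equivalently, this is the integrated form of the subharmonicity of the $P$-function $P=\tfrac12|\nabla u|^2-u$, for which Bochner's formula together with $\Delta u=N$ gives $\Delta P=|\nabla^2 u|^2-N\ge 0$, so that $D=\int_\Omega \Delta P=\int_{\partial\Omega}\partial_\nu P$ and the task becomes to show that the boundary flux of $P$ is nonpositive.

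First I would feed $u$ into Reilly's identity (the general form recalled in Section~\ref{sec notation}) and use $\Delta u=N$ together with $u_\nu=-\beta u$ on $\partial\Omega$ to express $D$ as a boundary integral,
\begin{equation*}
D=N(N-1)|\Omega|-\int_{\partial\Omega}\Bigl[(N-1)\mathcal{M}\,\beta^2u^2+2\,u_\nu\,\Delta_\tau u+\mathrm{II}(\nabla_\tau u,\nabla_\tau u)\Bigr]\,dS,
\end{equation*}
where $\nabla_\tau$, $\Delta_\tau$ denote the tangential gradient and Laplace--Beltrami operator and $\mathrm{II}$ the second fundamental form of $\partial\Omega$. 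The decisive simplification is that the Robin condition gives $\nabla_\tau u_\nu=-\beta\nabla_\tau u$, so integrating the middle term by parts on the closed manifold $\partial\Omega$ replaces $2u_\nu\Delta_\tau u$ by $2\beta|\nabla_\tau u|^2$, eliminating the a~priori unknown normal--normal Hessian. Integrating $\Delta u=N$ once more and using the Robin condition also yields the baseline identity $N(N-1)|\Omega|=-(N-1)\beta\int_{\partial\Omega}u\,dS$, so that $-D$ reduces to a boundary integral in $u,\ u^2,\ |\nabla_\tau u|^2,\ \mathcal{M}$ and $\mathrm{II}$ only.

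The crux is the term $(N-1)\mathcal{M}\,\beta^2u^2$, whose sign is not controlled since $\mathcal{M}$ may be negative. To dispose of it I would invoke the overdetermined condition, which after writing $|\nabla u|^2=\beta^2u^2+|\nabla_\tau u|^2$ reads
\begin{equation*}
|\nabla_\tau u|^2=C-2Nu+\beta^2u^2-(N-1)\beta\,u^2\mathcal{M}\qquad\text{on }\partial\Omega,
\end{equation*}
and thus lets me trade $(N-1)\beta u^2\mathcal{M}$ for $C-2Nu+\beta^2u^2-|\nabla_\tau u|^2$. After this substitution the curvature factor cancels and $-D$ collapses to
\begin{equation*}
-D=\int_{\partial\Omega}\Bigl[\beta|\nabla_\tau u|^2+\mathrm{II}(\nabla_\tau u,\nabla_\tau u)\Bigr]\,dS+\beta\int_{\partial\Omega}\bigl(C-(N+1)u+\beta^2u^2\bigr)\,dS.
\end{equation*}
Now the two pieces are separately nonnegative: the first because $\mathrm{II}(\nabla_\tau u,\nabla_\tau u)\ge\kappa_{\min}|\nabla_\tau u|^2$ and $\beta+\kappa_{\min}\ge 0$ by \eqref{sci}; the second because the explicit value $C=-R^2-R(N+1)/\beta$ together with $\int_{\partial\Omega}u\,dS=-R|\partial\Omega|/\beta$ reduces it to $\beta\bigl(\beta^2\int_{\partial\Omega}u^2-R^2|\partial\Omega|\bigr)$, which is $\ge 0$ by Jensen's inequality. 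Hence $-D\ge 0$, and combined with $D\ge0$ we conclude $D=0$.

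Finally I would read off rigidity from the equality cases. The vanishing of $D$ forces pointwise equality in Cauchy--Schwarz, i.e. $\nabla^2u\equiv I$ in $\Omega$, so $u=\tfrac12|x-z|^2+c$ for some $z\in\R^N$ and $c\in\R$; it also forces equality in Jensen, i.e. $u$ is constant on $\partial\Omega$. Consequently $\partial\Omega\subset\{|x-z|=\text{const}\}$ is a sphere and, by connectedness, $\Omega$ is the ball of radius $R$ centred at $z$, on which $u$ is radial. The main obstacle is precisely the uncontrolled-sign curvature term $(N-1)\mathcal{M}\beta^2u^2$: the whole argument hinges on removing it through the overdetermined condition and on the fact that the prescribed constant $C$ is tuned so that the remaining scalar boundary integral is exactly a nonnegative Jensen deficit, while \eqref{sci} is what keeps the second-fundamental-form contribution nonnegative.
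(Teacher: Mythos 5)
Your proof is correct, and it shares the paper's overall architecture: both arguments show that $\int_\Omega\Delta P\,dx$ plus a nonnegative boundary quantity (tuned by the choice of $C$) equals a boundary term that hypothesis \eqref{sci} makes nonpositive, and both read rigidity off the equality case of Cauchy--Schwarz, i.e.\ $\nabla^2u\equiv I$. Where you genuinely differ is in how the key identity is produced and how the sign of the final term is recognized. The paper never invokes the integrated Reilly formula: it proves a pointwise boundary lemma (Lemma \ref{lemma identity}, obtained by differentiating the Robin condition tangentially along $\partial\Omega$), combines it with the pointwise Reilly decomposition \eqref{eq decomposition Delta u} and the divergence theorem applied to $\nabla P$ (Theorem \ref{teo first identity}), and then substitutes the integrated overdetermined condition to reach Theorem \ref{thm: identity Serrin}. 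You reach the same intermediate identity in one stroke: after inserting $u_\nu=-\beta u$, your Reilly-plus-tangential-integration-by-parts formula coincides term by term with Theorem \ref{teo first identity}. Likewise, your Jensen deficit is literally the paper's square term: expanding $\beta\int_{\partial\Omega}\bigl(\langle\nabla u,\nu\rangle-R\bigr)^2\,d\mathcal H^{N-1}$ using $u_\nu=-\beta u$ and $\int_{\partial\Omega}u_\nu\,d\mathcal H^{N-1}=N|\Omega|=R|\partial\Omega|$ yields exactly $\beta\bigl(\beta^2\int_{\partial\Omega}u^2\,d\mathcal H^{N-1}-R^2|\partial\Omega|\bigr)$, so Jensen versus complete-the-square is a repackaging of the same quantity; the equality case hands you ``$u$ constant on $\partial\Omega$'' directly, whereas the paper gets the pointwise conclusions $\langle\nabla u,\nu\rangle\equiv R$ and $\nabla^\tau u\equiv 0$. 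What each approach buys: yours is shorter if Reilly's integral identity is quoted as known, and it bypasses the extension-of-$\nu$ computation in Lemma \ref{lemma identity}; the paper's pointwise lemma and the Fundamental Identity are, however, reused for its other rigidity results (the Soap Bubble theorem and the two curvature-overdetermined problems), which your integrated shortcut would not immediately provide.
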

We stress that, being condition \eqref{sci} clearly fulfilled in the Dirichlet case $\beta \to +\infty$, then our result generalizes the Serrin's one. The key ingredient of the proof is a fundamental integral identity in the spirit of  \cite{mp19} (see Section \ref{sec integral identities} for details). Finally, we also refer to  \cite{mmp} for further fundamental integral identities under constant Neumann boundary condition.
 
The structure of the paper is the following. In Section \ref{sec notation} we introduce some notation, we recall some known fact of differential geometry, and the radial solution for the torsional rigidity with Robin boundary conditions. In Section \ref{sec integral identities} we present some key integral identities and finally in the last Section \ref{sec overdet problem} we state and prove our rigidity results.
 
\section{Notation and preliminaries}
\label{sec notation}
\subsection{Some differential geometry's tools}
Let be $N \in \N $, with $N\ge 2$. We consider the Euclidean space $\R^N$, endowed with the standard Euclidean scalar product $\langle \cdot,\cdot \rangle$, and we denote by $B_R(z)$ the ball of center $z$ and of radius $R > 0$ (we set $B_R= B_R(0)$).

Let $\Omega \subset \R^N$ be an open, bounded set with $C^2$ boundary. Then, we denote by $\mathcal{M}$ the mean curvature of $\partial \Omega$, namely
\[
\mathcal{M}=\frac{1}{N-1} \sum_{i=1}^{N-1} \kappa_i,
\]
where $\kappa_i$ for $i=1, \dots,N-1$ denote the $i^{\text{th}}$ principal curvature of $\partial \Omega$ ($\kappa_i$ are oriented so that convex sets have positive curvatures).

In what follows $\nu$ denotes the unit outer normal to the boundary. For a function $f\in C^2(\overline{\Omega})$, we denote by $\nabla f = \left(\frac{\partial f}{\partial x_1},\dots,\frac{\partial f}{\partial x_N} \right)$ the gradient of $f$, where $\frac{\partial f}{\partial x_i}$ denotes the partial derivative with respect to the element of the base $x_i$ of $\R^N$. Moreover, we denote by $\nabla^\tau f$ the tangential gradient of the function $f$ along $\partial \Omega$, given by
\begin{equation}
\label{gradtan}
\nabla^{\tau}f= \nabla f- \dfrac{\partial f}{\partial \nu} \nu = \nabla f - \langle \nabla f, \nu \rangle \nu.
\end{equation}
Here and throughout the paper, we adopt the Einstein summation convention for
repeated indices.
Analogously, for any smooth vector field $v\colon \Omega \to \R^N$ the tangential divergence is given by
\begin{equation*}
\div_{\tau} v=  \div v- \langle (\nabla v) \,\nu , \nu\rangle, 
\end{equation*}
where $\nabla v \in \R^{N \times N}$. In the sequel we will frequently use the Divergence Theorem on $\partial \Omega$, hence we recall it (see for instance \cite{giusti,li,hepi}).

\begin{teo}[Divergence Theorem on $\partial \Omega$]\label{thm: divergence part Om}
    Let $\Omega$ be an open, bounded subset of $\R^N$ with $C^2$ boundary. Let $f\in C^1(\partial \Omega) $ and $v \in C^{1}(\partial \Omega, \R^N)$. Then the following integration by parts formula holds:
    \begin{equation*}
    \int_{\partial \Omega}f\div_{\tau}v\, d\mathcal H^{N-1}=-\int_{\partial \Omega} \langle v,\nabla^{\tau} f\rangle \, d\mathcal H^{N-1} + (N-1)\int_{\partial \Omega} f \langle v,\nu\rangle \mathcal{M} \, d\mathcal H^{N-1}.
    \end{equation*} 
    In particular, for $v \in C^{2}(\partial \Omega)$, we have
    \begin{equation*}
    \int_{\partial \Omega} f \Delta_{\tau} v \, d\mathcal H^{N-1}=-\int_{\partial \Omega} \langle \nabla^{\tau}v,\nabla^{\tau} f\rangle \, d\mathcal H^{N-1} .
    \end{equation*} 
    where we set $\Delta_{\tau} v := \div_{\tau} \nabla^{\tau} v$.
\end{teo}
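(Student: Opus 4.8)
The plan is to derive the full statement from the single geometric fact that, on a closed $C^2$ hypersurface, the tangential divergence integrates to a mean-curvature term, and then to propagate this through a product rule. First I would record the tangential Leibniz identity
\[
\div_{\tau}(fv) = f\,\div_{\tau}v + \langle v, \nabla^{\tau}f\rangle \qquad \text{on } \partial\Omega,
\]
which I would check directly from the definitions in \eqref{gradtan}: since $\div(fv) = f\div v + \langle \nabla f, v\rangle$ and $\langle (\nabla(fv))\nu,\nu\rangle = f\langle (\nabla v)\nu,\nu\rangle + \langle v,\nu\rangle\langle \nabla f,\nu\rangle$, subtracting and collecting the normal contributions turns $\langle \nabla f, v\rangle - \langle v,\nu\rangle\langle \nabla f,\nu\rangle$ into $\langle v,\nabla^{\tau}f\rangle$. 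This computation also shows that $\div_{\tau}v$ and $\nabla^{\tau}f$ depend only on the restrictions of $v$ and $f$ to $\partial\Omega$, so all quantities are intrinsic and independent of the chosen $C^1$ extensions.

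Granting this, everything reduces to the case $f\equiv 1$, that is to the closed-surface identity
\[
\int_{\partial\Omega}\div_{\tau}v\, d\mathcal H^{N-1} = (N-1)\int_{\partial\Omega}\langle v,\nu\rangle\,\mathcal{M}\, d\mathcal H^{N-1}.
\]
To prove it I would split $v = v^{\tau} + \langle v,\nu\rangle\,\nu$ into tangential and normal parts. For the normal part, setting $g = \langle v,\nu\rangle$ and using the Leibniz identity together with $\nabla^{\tau}g \perp \nu$, one gets $\div_{\tau}(g\nu) = g\,\div_{\tau}\nu$; and since the tangential divergence of the unit normal is the sum of the principal curvatures, $\div_{\tau}\nu = \sum_{i=1}^{N-1}\kappa_i = (N-1)\mathcal{M}$ (equivalently $\Delta d = (N-1)\mathcal{M}$ on $\partial\Omega$, where $d$ is the signed distance and $|\nabla d|\equiv 1$ forces $\langle (\nabla\nu)\nu,\nu\rangle = 0$), the normal part contributes precisely the right-hand side. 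For the tangential part I would invoke the divergence theorem on the closed $C^2$ manifold $\partial\Omega$: the integral of $\div_{\tau}v^{\tau}$ vanishes because $\partial\Omega$ has no boundary. Concretely, I would cover $\partial\Omega$ by finitely many $C^2$ coordinate patches with a subordinate partition of unity $\{\phi_\alpha\}$, apply the classical divergence theorem to each compactly supported field $\phi_\alpha v^{\tau}$ (whose tangential-divergence integral vanishes, there being no boundary contribution), and sum using $\sum_\alpha\nabla^{\tau}\phi_\alpha = 0$ to conclude $\int_{\partial\Omega}\div_{\tau}v^{\tau}\, d\mathcal H^{N-1} = 0$.

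Once the base case is established, integrating the Leibniz identity and using $\langle fv,\nu\rangle = f\langle v,\nu\rangle$ gives the first displayed formula after rearrangement. The second formula is then an immediate specialization: applying the first to the tangent field $\nabla^{\tau}v$ (with $v\in C^2(\partial\Omega)$ a scalar) makes the term $\langle \nabla^{\tau}v,\nu\rangle$ vanish, so the curvature term drops and, with $\Delta_{\tau}v = \div_{\tau}\nabla^{\tau}v$, one obtains exactly $\int_{\partial\Omega}f\,\Delta_{\tau}v\, d\mathcal H^{N-1} = -\int_{\partial\Omega}\langle \nabla^{\tau}v,\nabla^{\tau}f\rangle\, d\mathcal H^{N-1}$. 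The main obstacle is the base case: the two geometric inputs, the vanishing of $\int_{\partial\Omega}\div_{\tau}v^{\tau}$ over the boundaryless $\partial\Omega$ and the identity $\div_{\tau}\nu = (N-1)\mathcal{M}$, are where the $C^2$ regularity and the closedness of $\partial\Omega$ genuinely enter, and checking that the local pieces from the partition of unity truly assemble to zero is the step requiring the most care, even though each ingredient is classical.
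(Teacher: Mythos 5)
The paper does not prove this statement at all: it is recalled as a known fact, with pointers to the cited books of Giusti, Li, and Henrot--Pierre, so there is no internal proof to compare against. Your blind proof is correct and follows the standard route of those references: the tangential Leibniz rule $\div_{\tau}(fv)=f\div_{\tau}v+\langle v,\nabla^{\tau}f\rangle$, reduction to the case $f\equiv 1$, the splitting $v=v^{\tau}+\langle v,\nu\rangle\nu$ with $\div_{\tau}\nu=(N-1)\mathcal{M}$ (an identity the paper itself records right after the statement), and the vanishing of $\int_{\partial\Omega}\div_{\tau}v^{\tau}\,d\mathcal H^{N-1}$ on the closed hypersurface; the second formula then follows by taking the tangential field $\nabla^{\tau}v$, for which the curvature term drops.

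Two steps are stated more glibly than they are proved. First, the Leibniz computation alone does not show extension-independence of $\div_{\tau}$; you need the separate (equally short) check that if $w\in C^1$ vanishes on $\partial\Omega$ then $\div_{\tau}w=0$ there: for tangent directions $\theta_i$ one has $(\nabla w)\theta_i=0$, so $\div w=\langle(\nabla w)\nu,\nu\rangle$ and the tangential divergence cancels. Second, your ``classical divergence theorem in a patch'' conceals the real content of the base case: one must identify the extrinsically defined $\div_{\tau}$ of a tangent field $X$ with the intrinsic divergence $\tfrac{1}{\sqrt{g}}\partial_i\bigl(\sqrt{g}\,X^i\bigr)$ in a local $C^2$ parametrization, after which each patch integral vanishes by the fundamental theorem of calculus and your partition-of-unity assembly (via Leibniz again, using $\sum_\alpha\nabla^{\tau}\phi_\alpha=0$) is sound. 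Both points are classical and fixable by direct computation, so I regard your proof as complete in outline; note also that $C^2$ regularity of $\partial\Omega$ gives exactly the $C^1$ normal and $C^1$ metric coefficients your argument needs.
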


We notice that, if $\Omega \subset \R^N$ is an open, bounded set with $C^2$ boundary, then
\begin{equation*}
\div_{\tau} \nu = (N-1) \mathcal{M},    
\end{equation*} 
and for any $v \in C^{1}(\overline{\Omega}, \R^N)$ one has
\begin{equation*}
\div v = \div_{\tau} v^{\tau} + (N-1)\langle v,\nu\rangle \mathcal{M} + \langle (\nabla v) \,\nu , \nu\rangle  \quad \text{on } \partial \Omega,
\end{equation*}
where $v^\tau$ is the tangential component of $v$. In particular, for a function $v \in C^2(\overline{\Omega})$, one has the well-known Reilly's identity:
\begin{equation}\label{eq decomposition Delta u}
\Delta v= \Delta_{\tau} v + (N-1)\langle \nabla v,\nu\rangle \mathcal{M} + \langle (\nabla^2 v) \,\nu , \nu\rangle  \quad \text{on } \partial \Omega
\end{equation}
(see \cite{re74, re77, re82} or \cite{hepi}).

\subsection{The radial torsional rigidity}
\label{sec rad tor}
In this section we briefly recall some known results  for the problem \eqref{PbR} in the radial case, that is when  $\Omega$ is a ball. Then we have the following problem
\begin{equation}\label{Trad}
    \begin{cases}
    \Delta q = N &\text{in } B_R(z),
    \\
    \dfrac{\partial q}{\partial \nu}+ \beta q = 0 &\text{on } \partial B_R(z),
    \end{cases}
\end{equation}
where we recall that $B_R(z)$ is the ball centered at $z$ with radius $R>0$, $\beta \in \mathbb{R} \setminus \{0\}$ and $\nu$ is the outer unit normal to the boundary. When $-\beta \neq \displaystyle \mu_k(B_R)=\frac{k}{R}$,  the solution $q$ to \eqref{Trad} can be explicitly computed and it is given by
\begin{equation}\label{eq q radial}
    q(x)= \frac{1}{2} |x-z|^2 -\frac{1}{2} R^2 - \frac{R}{\beta} \quad \text{for every } x \in \overline{B_R(z)}.
\end{equation}
We notice that, clearly, 
\[
\nabla q = x-z \quad \text{and}\quad \Delta q = N \quad \text{in } \overline{B_R(z)}, 
\]
while
\[
q = -\frac{R}{\beta} \quad \text{and} \quad \langle\nabla q, \nu \rangle= R \quad \text{on }\partial B_R(z).
\]
Moreover, in the radial setting, one can explicitly evaluate the constant $C$ of the overdetermined condition \eqref{spr intro}, namely the following holds:
\begin{equation*}
C=|\nabla q|^2 + 2Nq - 2\beta^2 q^2 + (N-1) \beta q^2 \mathcal{M}_0 = - R^2 - R \left(\frac{N+1}{\beta}\right)  \quad \text{on }\partial B_R(z),
\end{equation*}
where $\mathcal{M}_0 = \frac{1}{R}$ is the constant mean curvature of the ball $B_R(z)$. 

We mention that, at least for $\beta > 0$, the function $q$ and the constant $C$ are negative, and for $\beta \to +\infty$ the constant $C$ tends to the value $-R^2$, which is consistent with the classical Serrin problem with homogeneous Dirichlet boundary conditions. Instead, when $\beta < 0$, we observe that the function $q$ has not, in general, constant sign and the same holds for the constant $C$.

\section{Integral identities} \label{sec integral identities}
In this section we are going to compute different integral identities which will be used to prove our main rigidity results. In order to do that, we first provide the following lemma which will be useful in the sequel (see also \cite[Lemma 2.1]{mmp}).

\begin{lemma}\label{lemma identity}
    Let $\Omega$ be an open, connected, bounded subset of $\R^N$ with $C^2$ boundary. Let $\beta \in \mathbb{R}\setminus\{0\}$ with \eqref{beta condition} in force. Let $u\in C^2(\overline{\Omega})$ be a solution to the problem \eqref{PbR}. Then 
    \begin{equation}
    \label{fl}
    \langle \nabla^2 u \nabla u, \nu \rangle = \langle \nabla u, \nu \rangle \langle \nabla^2 u \, \nu , \nu \rangle - \sum_{i=1}^{N-1} \left(\beta + \kappa_i \right) \langle \nabla^\tau u, \theta_i \rangle^2 \quad \text{on } \partial\Omega,
    \end{equation}
    where $\{\kappa_i\}_{i=1,\dots,N-1}$ denote the principal curvatures of $\partial \Omega$ and $\{\theta_i\}_{i=1,\dots,N-1}$ is the associated tangential frame consisting of principal directions of $\partial\Omega$.
\end{lemma}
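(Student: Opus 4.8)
The plan is to work pointwise on $\partial\Omega$ in the orthonormal frame $\{\theta_1,\dots,\theta_{N-1},\nu\}$ consisting of the principal directions together with the outer normal, and to split the gradient into its normal and tangential parts, $\nabla u = \langle \nabla u,\nu\rangle\,\nu + \nabla^\tau u$. Feeding this decomposition into $\langle (\nabla^2 u)\nabla u,\nu\rangle$ and using linearity, the normal part immediately produces the term $\langle \nabla u,\nu\rangle\,\langle(\nabla^2 u)\nu,\nu\rangle$, so that the whole identity \eqref{fl} reduces to establishing
\begin{equation*}
\langle (\nabla^2 u)\nabla^\tau u,\nu\rangle = -\sum_{i=1}^{N-1}(\beta+\kappa_i)\,\langle \nabla^\tau u,\theta_i\rangle^2 \quad\text{on }\partial\Omega.
\end{equation*}

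Next I would expand the tangential gradient in the principal frame, $\nabla^\tau u = \sum_{i=1}^{N-1}\langle\nabla^\tau u,\theta_i\rangle\,\theta_i$, and exploit the symmetry of the Hessian to write $\langle(\nabla^2 u)\nabla^\tau u,\nu\rangle = \sum_{i=1}^{N-1}\langle\nabla^\tau u,\theta_i\rangle\,\langle(\nabla^2 u)\theta_i,\nu\rangle$. In this way the problem is localized to computing, for each principal direction, the mixed second-order quantities $\langle(\nabla^2 u)\theta_i,\nu\rangle$.

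To evaluate these, I would differentiate the Robin condition along the boundary. Writing $\partial_{\theta_i}$ for the derivative along the principal direction $\theta_i$ at a boundary point and regarding $\nabla u$ as a $C^1$ vector field on $\overline{\Omega}$, the product rule gives $\partial_{\theta_i}\langle\nabla u,\nu\rangle = \langle(\nabla^2 u)\theta_i,\nu\rangle + \langle\nabla u,\partial_{\theta_i}\nu\rangle$. The Weingarten relation for the chosen frame, $\partial_{\theta_i}\nu = \kappa_i\,\theta_i$, turns the last term into $\kappa_i\,\langle\nabla^\tau u,\theta_i\rangle$. On the other hand, the Robin boundary condition in \eqref{PbR} reads $\langle\nabla u,\nu\rangle = -\beta u$ on $\partial\Omega$, whence $\partial_{\theta_i}\langle\nabla u,\nu\rangle = -\beta\,\partial_{\theta_i}u = -\beta\,\langle\nabla^\tau u,\theta_i\rangle$. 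Comparing the two expressions yields $\langle(\nabla^2 u)\theta_i,\nu\rangle = -(\beta+\kappa_i)\,\langle\nabla^\tau u,\theta_i\rangle$, and substituting back into the sum of the previous step closes the argument.

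The computation is essentially a careful bookkeeping of the product rule on the boundary; the only delicate point, and the step I would double-check, is the sign convention in the Weingarten relation $\partial_{\theta_i}\nu=\kappa_i\theta_i$, consistent with the orientation of the $\kappa_i$ fixed in Section \ref{sec notation} (so that $\div_{\tau}\nu = \sum_i\kappa_i = (N-1)\mathcal{M}$), together with the correct matching of the $\langle\nabla u,\partial_{\theta_i}\nu\rangle$ term. Everything else is linear algebra in the orthonormal boundary frame.
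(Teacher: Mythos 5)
Your proposal is correct and follows essentially the same route as the paper: both arguments amount to differentiating the Robin condition $\langle\nabla u,\nu\rangle+\beta u=0$ tangentially along the principal directions, using the symmetry of $\nabla^2 u$ and the expansion of $\nabla^\tau u$ in the principal frame, and your key identity $\langle(\nabla^2 u)\theta_i,\nu\rangle=-(\beta+\kappa_i)\langle\nabla^\tau u,\theta_i\rangle$ is exactly the paper's relation $\langle(\nabla^2 u)\theta_i,\nu\rangle=-\langle(\nabla\nu)\nabla^\tau u,\theta_i\rangle-\beta\langle\nabla^\tau u,\theta_i\rangle$ after diagonalizing $\nabla\nu$. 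The only (harmless) difference is presentational: the paper extends $\nu$ to a tubular neighborhood via the distance function and argues that the full gradient of the level-set function $\langle\nabla u,\nu\rangle+\beta u$ is normal, whereas you work intrinsically on $\partial\Omega$ with the Weingarten relation $\partial_{\theta_i}\nu=\kappa_i\theta_i$, whose sign you correctly match to the paper's convention.
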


\begin{proof}
    Since $\Omega$ is of class $C^2$, we know that we can always extend the vector field $\nu$ smoothly to a tubular neighborhood $\mathcal{U}$ of $\partial \Omega$ (see, e.g., \cite{KP}). For instance, we can define $\nu = -\eta \nabla \mathrm{d}_{\partial\Omega}$, where the function $\mathrm{d}_{\partial\Omega}(x) := \mathrm{dist}(x,\partial\Omega)$ for every $x \in \overline{\Omega}$, and $\eta$ is a smooth cut-off function with support contained in the set in $\mathcal{U}$ and such that $\eta \equiv 1$ near $\partial\Omega$. Moreover, $\mathrm{d}_{\partial\Omega} \in C^2(\mathcal{U})$ and $|\nabla \mathrm{d}_{\partial\Omega}|^2 = 1$ in $\mathcal{U}$. Finally, by \cite[Appendix 14.6]{GT}, at any point in $\partial\Omega$, we can choose coordinates so that
    \begin{equation}\label{nabla nu}
        \nabla \nu = - \nabla^2 \mathrm{d}_{\partial\Omega} = 
        \left[\begin{matrix}
        \kappa_1  &\cdots &0 &0 \\
        \vdots &\ddots &\vdots  &\vdots \\
        0  &\cdots &\kappa_{N-1} &0 \\
        0  &\cdots &0 &0
        \end{matrix}
        \right].
    \end{equation}
    We now proceed assuming to extend the normal $\nu$ as described above, whenever it is required in the computation below. Since $u$ satisfies the Robin boundary conditions, then $\partial\Omega$ is a level surface for the function $\langle \nabla u, \nu \rangle + \beta u$, hence
    \begin{equation}\label{lemma eq 1}
        \nabla \Big( \langle \nabla u,\nu \rangle + \beta u \Big) = \langle \nabla \Big( \langle \nabla u,\nu \rangle + \beta u \Big), \nu \rangle \nu \quad \text{on } \partial\Omega.
    \end{equation}
    By direct computation we get
    \begin{equation*}
    \begin{aligned}
        \nabla \Big( \langle \nabla u,\nu \rangle + \beta u \Big) & = (\nabla^2 u) \nu + (\nabla \nu) \nabla u + \beta \nabla u 
        \\
        &=  
        (\nabla^2 u) \nu + (\nabla \nu) \Big( \nabla^\tau u +  \langle \nabla u,\nu \rangle \nu \Big)  + \beta \nabla u
        \\
        &= (\nabla^2 u) \nu + (\nabla \nu) \nabla^\tau u +  \langle \nabla u,\nu \rangle (\nabla \nu) \nu  + \beta \nabla u
        \\
        &= (\nabla^2 u) \nu + (\nabla \nu) \nabla^\tau u  + \beta \nabla u,
    \end{aligned}
    \end{equation*}
    where we have used the decomposition \eqref{gradtan} and the fact that $(\nabla \nu) \nu=0$ in $\mathcal{U}$, being the extension of $\nu$ unitary. Moreover, by \eqref{lemma eq 1}, we know that, for every $\theta_i$ with $i=1,\dots,N-1$, we have
    \begin{equation*}
        \langle \nabla \Big( \langle \nabla u,\nu \rangle + \beta u \Big), \theta_i \rangle = 0 \quad \text{on } \partial\Omega.
    \end{equation*}
    Hence, for every $i=1,\dots,N-1$, we obtain
    \begin{equation*}
        0 = \langle (\nabla^2 u) \nu + (\nabla \nu) \nabla^\tau u  + \beta \nabla u , \theta_i \rangle = \langle (\nabla^2 u) \nu , \theta_i \rangle + \langle (\nabla \nu) \nabla^\tau u , \theta_i \rangle + \langle \beta \nabla^\tau u , \theta_i \rangle \quad \text{on } \partial\Omega,
    \end{equation*}
    which, by the symmetry of $\nabla^2 u$, implies 
    \begin{equation*}
        \langle (\nabla^2 u) \theta_i , \nu \rangle = - \langle (\nabla \nu) \nabla^\tau u , \theta_i \rangle - \langle \beta \nabla^\tau u , \theta_i \rangle \quad \text{on } \partial\Omega.
    \end{equation*}
    Finally, again by \eqref{gradtan} and by the obvious decomposition $\displaystyle\nabla^\tau u = \sum_{i=1}^{N-1} \langle \nabla^\tau u, \theta_i \rangle \theta_i$, we have
    \begin{equation*}
    \begin{aligned}
        \langle (\nabla^2 u) \nabla u , \nu \rangle &=  \langle \nabla u,\nu \rangle \langle (\nabla^2 u) \nu , \nu \rangle + \langle (\nabla^2 u) \nabla^\tau u , \nu \rangle 
        \\ &=  \langle \nabla u,\nu \rangle \langle (\nabla^2 u) \nu , \nu \rangle + \sum_{i=1}^{n-1} \langle \nabla^\tau u, \theta_i \rangle \langle (\nabla^2 u) \theta_i , \nu \rangle
        \\ &=  \langle \nabla u,\nu \rangle \langle (\nabla^2 u) \nu , \nu \rangle - \sum_{i=1}^{n-1} \langle \nabla^\tau u, \theta_i \rangle 
        \Big( \langle (\nabla \nu) \nabla^\tau u , \theta_i \rangle + \langle \beta \nabla^\tau u , \theta_i \rangle  \Big) 
    \end{aligned}
    \end{equation*}
    on $\partial\Omega$. Using the fact that $\langle (\nabla \nu) \theta_i , \theta_i \rangle = \kappa_i$, we conclude.
\end{proof}

\begin{rem}\label{remark curvature}
We observe that by \eqref{nabla nu}, \eqref{fl}  can be also written in the following way
\begin{equation*}
  \langle \nabla^2 u \nabla u, \nu \rangle= \langle \nabla u, \nu \rangle \langle \nabla^2 u \, \nu , \nu \rangle - \langle (\nabla \nu ) \nabla^\tau u, \nabla^\tau u \rangle  - \beta |\nabla^\tau u|^2 \quad \text{on } \partial \Omega.
\end{equation*}
Moreover
    \begin{equation}
    \label{stima}
        \kappa_{\min} |\nabla^\tau u|^2 \leq \langle (\nabla \nu ) \nabla^\tau u, \nabla^\tau u \rangle \leq \kappa_{\max} |\nabla^\tau u|^2 \quad \text{on } \partial\Omega,
    \end{equation}
    where 
    \begin{equation}
    \label{k}
    \kappa_{\min} = \min_{\partial \Omega} \min_{i=1,\dots,N-1}  \kappa_i(x), \qquad \kappa_{\max} = \max_{\partial \Omega} \max_{i=1,\dots,N-1}  \kappa_i(x).
    \end{equation}
\end{rem}

For a solution $u$ of problem \eqref{PbR}, we set
\begin{equation}\label{eq P}
        P = \frac{1}{2} |\nabla u|^2 - u \quad\text{on }\overline{\Omega}.
\end{equation}
We are now in the position to prove our first identity. 

\begin{teo}[Fundamental Identity]\label{teo first identity}
    Let $\Omega$ be an open, connected, bounded subset of $\R^N$ with $C^2$ boundary. Let $\beta \in \mathbb{R} \setminus \{0\}$ with \eqref{beta condition} in force. Let $u\in C^2(\overline{\Omega})$ be a solution to the problem \eqref{PbR}. Let $P$ be as in \eqref{eq P}.
    Then the following identity holds:
    \begin{align*}
        \int_{\Omega} \Delta P \, dx = &- \int_{\partial\Omega} \langle (\nabla \nu ) \nabla^\tau u, \nabla^\tau u \rangle \,d\mathcal H^{N-1} - 2 \beta \int_{\partial\Omega} |\nabla^\tau u|^2 \,d\mathcal H^{N-1} 
        \\
        & - (N-1) \int_{\partial\Omega} \langle \nabla u, \nu \rangle \Big( \langle \nabla u, \nu \rangle \mathcal{M} - 1 \Big) \,d\mathcal H^{N-1}. 
    \end{align*}
\end{teo}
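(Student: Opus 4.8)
The plan is to reduce the volume integral to a boundary integral via the Divergence Theorem, and then to rewrite the resulting boundary integrand step by step using the pointwise identity of Lemma~\ref{lemma identity} (in the form given in Remark~\ref{remark curvature}), the Reilly identity \eqref{eq decomposition Delta u}, and finally a tangential integration by parts in which the Robin boundary condition plays the decisive role. First I would apply the Divergence Theorem to $\nabla P$ to obtain $\int_{\Omega} \Delta P \, dx = \int_{\partial\Omega} \langle \nabla P, \nu \rangle \, d\mathcal H^{N-1}$. Since $P$ is given by \eqref{eq P}, a direct computation yields $\nabla P = (\nabla^2 u) \nabla u - \nabla u$, so the boundary integrand is $\langle (\nabla^2 u) \nabla u, \nu \rangle - \langle \nabla u, \nu \rangle$.

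Next I would substitute the expression for $\langle (\nabla^2 u) \nabla u, \nu \rangle$ supplied by Remark~\ref{remark curvature}, namely
\begin{equation*}
\langle (\nabla^2 u)\nabla u, \nu \rangle = \langle \nabla u, \nu \rangle \langle (\nabla^2 u)\, \nu , \nu \rangle - \langle (\nabla \nu ) \nabla^\tau u, \nabla^\tau u \rangle - \beta |\nabla^\tau u|^2 \quad \text{on } \partial\Omega,
\end{equation*}
which already produces the curvature term and a first factor $-\beta|\nabla^\tau u|^2$. To eliminate the normal--normal Hessian term $\langle (\nabla^2 u)\,\nu,\nu\rangle$, I would invoke the Reilly identity \eqref{eq decomposition Delta u} together with $\Delta u = N$ from \eqref{PbR}, getting $\langle (\nabla^2 u)\,\nu,\nu\rangle = N - \Delta_\tau u - (N-1)\langle \nabla u, \nu\rangle \mathcal{M}$ on $\partial\Omega$. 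Multiplying by $\langle \nabla u,\nu\rangle$ and collecting, the integrand becomes, up to the curvature and $\beta$ terms, $(N-1)\langle\nabla u,\nu\rangle - \langle\nabla u,\nu\rangle\Delta_\tau u - (N-1)\langle\nabla u,\nu\rangle^2\mathcal{M}$.

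The crucial step is the treatment of $\int_{\partial\Omega}\langle\nabla u,\nu\rangle\Delta_\tau u \, d\mathcal H^{N-1}$. Applying the second integration by parts formula of Theorem~\ref{thm: divergence part Om} with $f = \langle \nabla u,\nu\rangle$ and $v = u$ gives $\int_{\partial\Omega}\langle\nabla u,\nu\rangle\Delta_\tau u \, d\mathcal H^{N-1} = -\int_{\partial\Omega}\langle \nabla^\tau u, \nabla^\tau \langle \nabla u,\nu\rangle\rangle \, d\mathcal H^{N-1}$. Here the Robin condition enters: on $\partial\Omega$ one has $\langle\nabla u,\nu\rangle = -\beta u$, hence $\nabla^\tau\langle\nabla u,\nu\rangle = -\beta\nabla^\tau u$, so this integral equals $\beta\int_{\partial\Omega}|\nabla^\tau u|^2 \, d\mathcal H^{N-1}$, contributing a second factor $-\beta|\nabla^\tau u|^2$ and thereby explaining the coefficient $-2\beta$ in the statement. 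Finally I would combine the two scalar contributions $(N-1)\langle\nabla u,\nu\rangle$ and $-(N-1)\langle\nabla u,\nu\rangle^2\mathcal{M}$ into $-(N-1)\langle\nabla u,\nu\rangle(\langle\nabla u,\nu\rangle\mathcal{M}-1)$, which yields precisely the asserted identity.

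I expect the main obstacle to be purely organizational rather than conceptual: it lies in the tangential integration by parts, and specifically in recognizing that the Robin condition linearizes $\nabla^\tau\langle\nabla u,\nu\rangle$ as $-\beta\nabla^\tau u$. This is what generates the two independent copies of $\beta|\nabla^\tau u|^2$ and hence the correct coefficient; keeping careful track of signs while merging the Reilly substitution with this boundary integration by parts is the delicate part of the bookkeeping.
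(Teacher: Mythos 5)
Your proposal is correct and follows essentially the same route as the paper: Divergence Theorem applied to $\nabla P$, substitution of the Hessian identity from Remark~\ref{remark curvature} together with Reilly's identity \eqref{eq decomposition Delta u}, and a tangential integration by parts driven by the Robin condition to produce the second $-\beta\int_{\partial\Omega}|\nabla^\tau u|^2\,d\mathcal H^{N-1}$ term. The only (immaterial) difference is the order in that last step: you integrate by parts first and then use $\nabla^\tau\langle\nabla u,\nu\rangle=-\beta\nabla^\tau u$, whereas the paper first replaces $\langle\nabla u,\nu\rangle$ by $-\beta u$ and then integrates by parts.
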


\begin{proof}
    By the Divergence Theorem we can write
    \begin{equation*}
        \int_{\Omega} \Delta P \, dx = \int_{\partial\Omega} \langle \nabla P, \nu \rangle  \,d\mathcal H^{N-1}.
    \end{equation*}
    Then to compute $\langle \nabla P, \nu \rangle$ we use the definition of $P$ and Lemma \ref{lemma identity}. We obtain
    \begin{align*}
        \langle \nabla P, \nu \rangle &= \langle \nabla^2 u \nabla u, \nu \rangle - \langle \nabla u , \nu \rangle 
        \\
        &= 
        \langle \nabla u, \nu \rangle \Big( N - \Delta_{\tau} u - (N-1)\langle \nabla u,\nu\rangle \mathcal{M} \Big) 
        \\
        &\quad 
        - \langle (\nabla \nu ) \nabla^\tau u, \nabla^\tau u \rangle  - \beta |\nabla^\tau u|^2 - \langle \nabla u , \nu \rangle \quad \text{on } \partial\Omega, 
    \end{align*}
    where in the last equality we have used Remark \ref{remark curvature} and the Reilly's identity \eqref{eq decomposition Delta u}.
    Thus,
    \begin{align*}
        \int_{\Omega} \Delta P \, dx = & \int_{\partial\Omega} \langle \nabla u, \nu \rangle \Big( N - \Delta_{\tau} u - (N-1)\langle \nabla u,\nu\rangle \mathcal{M} \Big) \,d\mathcal H^{N-1}
        \\
        &- \int_{\partial\Omega} \langle (\nabla \nu ) \nabla^\tau u, \nabla^\tau u \rangle \,d\mathcal H^{N-1} - \beta \int_{\partial\Omega} |\nabla^\tau u|^2 \,d\mathcal H^{N-1}  
        \\
        &-  \int_{\partial\Omega} \langle \nabla u , \nu \rangle \,d\mathcal H^{N-1}.
    \end{align*}
    By straightforward algebraic manipulations we get
    \begin{align*}
        \int_{\Omega} \Delta P \, dx = & - \int_{\partial\Omega} \langle (\nabla \nu ) \nabla^\tau u, \nabla^\tau u \rangle \,d\mathcal H^{N-1} - \beta \int_{\partial\Omega} |\nabla^\tau u|^2 \,d\mathcal H^{N-1} 
        \\
        &+ (N-1) \int_{\partial\Omega} \langle \nabla u , \nu \rangle \,d\mathcal H^{N-1}
        - (N-1) \int_{\partial\Omega} \langle \nabla u , \nu \rangle^2 \mathcal{M} \,d\mathcal H^{N-1} 
        \\
        &- \int_{\partial\Omega} \langle \nabla u , \nu \rangle \Delta_\tau u \,d\mathcal H^{N-1}.  
    \end{align*}
    Moreover, a simple integration by parts on $\partial\Omega$ (see Theorem \ref{thm: divergence part Om}) informs us that
    \begin{equation*}
        - \int_{\partial\Omega} \langle \nabla u , \nu \rangle \Delta_\tau u \,d\mathcal H^{N-1} = \int_{\partial\Omega} \beta u \, \Delta_\tau u \,d\mathcal H^{N-1} = - \beta \int_{\partial\Omega} |\nabla^\tau u|^2 \,d\mathcal H^{N-1}.
    \end{equation*}
    Finally, observing that
    \begin{align*}
        (N-1) \int_{\partial\Omega} \langle \nabla u , \nu \rangle \,d\mathcal H^{N-1}
        - (N-1) \int_{\partial\Omega} \langle \nabla u , \nu \rangle^2 \mathcal{M} \,d\mathcal H^{N-1} 
        \\
        = - (N-1) \int_{\partial\Omega} \langle \nabla u, \nu \rangle \Big( \langle \nabla u, \nu \rangle \mathcal{M} -1 \Big) \,d\mathcal H^{N-1},
    \end{align*}
    the conclusion follows.
\end{proof}

We are now in the position to state and prove an identity that generalizes to the Robin boundary conditions the identity of \cite[Theorem 2.2]{mp19}.

\begin{teo}[Identity for the Soap Bubble Theorem]\label{thm: identity for SBT}
    Let $\Omega$ be an open, connected, bounded subset of $\R^N$ with $C^2$ boundary. Let $\beta \in \mathbb{R} \setminus \{0\}$ with \eqref{beta condition} in force. Let $u\in C^2(\overline{\Omega})$ be a solution to the problem \eqref{PbR}. Let $P$ be as in \eqref{eq P}. Then the following identity holds:
    \begin{align*}
        \int_{\Omega} \Delta P \, dx +   (N-1)\int_{\partial\Omega} &\Big( \langle \nabla u ,\nu \rangle - R \Big)^2 \mathcal{M}_0 \,d\mathcal H^{N-1} 
        \\
        =& - \int_{\partial\Omega} \langle (\nabla \nu ) \nabla^\tau u, \nabla^\tau u \rangle \,d\mathcal H^{N-1} - 2 \beta \int_{\partial\Omega} |\nabla^\tau u|^2 \,d\mathcal H^{N-1} 
        \\
        & - (N-1) \int_{\partial\Omega} \langle \nabla u, \nu \rangle^2 \Big( \mathcal{M} -  \mathcal{M}_0 \Big) \,d\mathcal H^{N-1}, 
    \end{align*}
    where 
    \begin{equation*}
        R = \frac{N |\Omega|}{|\partial\Omega|} \quad\text{and}\quad \mathcal{M}_0 = \frac{1}{R}.
    \end{equation*}
    
\end{teo}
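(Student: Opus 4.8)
The plan is to deduce this identity directly from the Fundamental Identity of Theorem~\ref{teo first identity}, since the first two boundary integrals on its right-hand side coincide \emph{verbatim} with those in the present statement. Hence the entire content lies in rewriting the last term
\[
- (N-1) \int_{\partial\Omega} \langle \nabla u, \nu \rangle \Big( \langle \nabla u, \nu \rangle \mathcal{M} - 1 \Big) \,d\mathcal H^{N-1}
\]
as the sum of the curvature-defect term and the completed-square term appearing above. Writing $g := \langle \nabla u, \nu \rangle$ for brevity, I would first split the mean curvature as $\mathcal{M} = (\mathcal{M} - \mathcal{M}_0) + \mathcal{M}_0$, obtaining
\[
-(N-1)\int_{\partial\Omega} g\,(g\mathcal{M}-1)\,d\mathcal H^{N-1}
= -(N-1)\int_{\partial\Omega} g^2(\mathcal{M}-\mathcal{M}_0)\,d\mathcal H^{N-1}
-(N-1)\int_{\partial\Omega}\Big(g^2\mathcal{M}_0 - g\Big)\,d\mathcal H^{N-1}.
\]
The first summand is already the curvature-defect term of the statement, so it remains only to identify the second summand with $-(N-1)\int_{\partial\Omega}(g-R)^2\,\mathcal{M}_0\,d\mathcal H^{N-1}$.

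The key arithmetic input is that $\mathcal{M}_0 = 1/R$ is constant and that, by the Divergence Theorem applied to $\nabla u$ together with the equation $\Delta u = N$,
\[
\int_{\partial\Omega} g\,d\mathcal H^{N-1} = \int_{\partial\Omega}\langle \nabla u,\nu\rangle\,d\mathcal H^{N-1} = \int_{\Omega}\Delta u\,dx = N|\Omega| = R\,|\partial\Omega|,
\]
where the last equality is just the definition $R = N|\Omega|/|\partial\Omega|$. Expanding $(g-R)^2\mathcal{M}_0 = \mathcal{M}_0 g^2 - 2R\mathcal{M}_0\, g + R^2\mathcal{M}_0$ and using $R\mathcal{M}_0 = 1$, the cross term integrates to $-2\int_{\partial\Omega} g$ while the constant term integrates to $R^2\mathcal{M}_0|\partial\Omega| = R|\partial\Omega| = \int_{\partial\Omega} g$; summing, one gets $\int_{\partial\Omega}(g-R)^2\mathcal{M}_0 = \int_{\partial\Omega} g^2\mathcal{M}_0 - \int_{\partial\Omega} g$, which is exactly the required equality. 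Moving the term $(N-1)\int_{\partial\Omega}(g-R)^2\mathcal{M}_0\,d\mathcal H^{N-1}$ to the left-hand side then produces the claimed identity.

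This argument is entirely elementary: no geometric input beyond Theorem~\ref{teo first identity} is needed, and the only analytic ingredient is the Divergence Theorem on $\Omega$. Consequently I do not expect a genuine obstacle here; the sole point requiring care is the bookkeeping of signs and of the overall factor $(N-1)$, together with the systematic use of the fact that $\mathcal{M}_0$ is constant, so that it may be pulled out of every boundary integral and the identity $R\mathcal{M}_0 = 1$ can be applied freely.
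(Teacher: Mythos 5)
Your proposal is correct and follows essentially the same route as the paper: both start from the Fundamental Identity of Theorem~\ref{teo first identity}, split $\mathcal{M}=(\mathcal{M}-\mathcal{M}_0)+\mathcal{M}_0$, complete the square in $\langle\nabla u,\nu\rangle - R$, and absorb the cross and constant terms via $\int_{\partial\Omega}\langle\nabla u,\nu\rangle\,d\mathcal H^{N-1}=N|\Omega|=R|\partial\Omega|$. The only difference is cosmetic: the paper expands $\langle\nabla u,\nu\rangle^2$ in terms of $(\langle\nabla u,\nu\rangle-R)^2$, whereas you expand $(\langle\nabla u,\nu\rangle-R)^2$ directly, which is the same algebra read in the opposite direction.
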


\begin{proof}
    We work on the identity of Theorem \ref{teo first identity}. We first observe that adding and subtracting the term 
    \[(N-1) \int_{\partial\Omega} \langle \nabla u, \nu \rangle^2 \mathcal{M}_0 \,d\mathcal H^{N-1},
    \]
    we deduce that 
    \begin{align*}
        - (N-1) \int_{\partial\Omega} &\langle \nabla u, \nu \rangle \Big( \langle \nabla u, \nu \rangle \mathcal{M} -1 \Big) \,d\mathcal H^{N-1} 
        \\
        = &- (N-1) \int_{\partial\Omega} \langle \nabla u, \nu \rangle^2 \Big( \mathcal{M} -  \mathcal{M}_0 \Big) \,d\mathcal H^{N-1}
        \\
        &- (N-1) \int_{\partial\Omega} \langle \nabla u, \nu \rangle^2 \mathcal{M}_0 \,d\mathcal H^{N-1} + (N-1) \int_{\partial\Omega} \langle \nabla u , \nu \rangle \,d\mathcal H^{N-1}.
    \end{align*}
    Then, by exploiting the simple relation
    \[
    \langle \nabla u, \nu \rangle^2 = \Big( \langle \nabla u ,\nu \rangle - R \Big)^2 + 2 \langle \nabla u, \nu \rangle R - R^2,
    \]
    we obtain 
    \begin{align*}
        (N-1) \int_{\partial\Omega} \langle \nabla u, \nu \rangle^2 \mathcal{M}_0 \,d\mathcal H^{N-1} & = (N-1) \int_{\partial\Omega} \Big( \langle \nabla u ,\nu \rangle - R \Big)^2 \mathcal{M}_0 \,d\mathcal H^{N-1} 
        \\
        &\quad + 2 (N-1) \int_{\partial\Omega} \langle \nabla u, \nu \rangle R \mathcal{M}_0 \,d\mathcal H^{N-1} 
        \\
        &\quad - (N-1) \int_{\partial\Omega} R^2 \mathcal{M}_0 \,d\mathcal H^{N-1}
        \\
        &= (N-1) \int_{\partial\Omega} \Big( \langle \nabla u ,\nu \rangle - R \Big)^2 \mathcal{M}_0 \,d\mathcal H^{N-1} 
        \\
        &\quad + (N-1) \int_{\partial\Omega} \langle \nabla u, \nu \rangle \,d\mathcal H^{N-1},
    \end{align*}
    where in the last equality we have used the definition of $R$ and $\mathcal{M}_0$ and the fact that, by Divergent Theorem, one has
    \begin{equation}\label{eq property of int u nu}
        \int_{\partial\Omega} \langle \nabla u, \nu \rangle \,d\mathcal H^{N-1}
        = \int_{\Omega} \Delta u \,dx = N |\Omega| = R |\partial\Omega| = \int_{\partial\Omega} R \,d\mathcal H^{N-1}.
    \end{equation}
    Summing up all previous formulas, we deduce that 
    \begin{align*}
        &- (N-1) \int_{\partial\Omega} \langle \nabla u, \nu \rangle \Big( \langle \nabla u, \nu \rangle \mathcal{M} -1 \Big) \,d\mathcal H^{N-1} 
        \\
        &= - (N-1) \int_{\partial\Omega} \langle \nabla u, \nu \rangle^2 \Big( \mathcal{M} -  \mathcal{M}_0 \Big) \,d\mathcal H^{N-1}
        -(N-1) \int_{\partial\Omega} \Big( \langle \nabla u ,\nu \rangle - R \Big)^2 \mathcal{M}_0 \,d\mathcal H^{N-1},
    \end{align*}
    and the proof is of the identity is complete. 
    
\end{proof}

We are now in the position to state and prove an identity when the overdetermined condition \eqref{spr intro} of the Introduction is in force.

\begin{teo}[Identity for the Serrin's problem with Robin boundary conditions] \label{thm: identity Serrin}
    Let $\Omega$ be an open, connected, bounded subset of $\R^N$ with $C^2$ boundary. Let $\beta \in \mathbb{R} \setminus \{0\}$ with \eqref{beta condition} in force. Let $u\in C^2(\overline{\Omega})$ be a solution to the problem \eqref{PbR}. Let $P$ be as in \eqref{eq P}.
    Assume that $u$ satisfies also
    \begin{equation*}
        |\nabla u|^2 + 2Nu - 2\beta^2 u^2 + (N-1) \beta u^2 \mathcal{M} = C \quad \text{on } \partial\Omega,
    \end{equation*}
    where 
    \begin{equation*}
        R = \frac{N |\Omega|}{|\partial\Omega|} \quad\text{and}\quad C = - R^2 - R \left(\frac{N+1}{\beta}\right).
    \end{equation*}
    Then the following identity holds:
    \begin{align*}
        \int_{\Omega} \Delta P \, dx + \beta \int_{\partial\Omega} \Big( \langle \nabla u ,\nu \rangle - R \Big)^2 \,d\mathcal H^{N-1} 
        = &- \int_{\partial\Omega} \langle (\nabla \nu ) \nabla^\tau u, \nabla^\tau u \rangle \,d\mathcal H^{N-1} 
        \\
        &- \beta \int_{\partial\Omega} |\nabla^\tau u|^2 \,d\mathcal H^{N-1}.
    \end{align*}
\end{teo}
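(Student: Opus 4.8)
The plan is to start from the Fundamental Identity of Theorem~\ref{teo first identity} and to convert its last boundary integral by means of the overdetermined condition. After moving the term $\beta\int_{\partial\Omega}\bigl(\langle\nabla u,\nu\rangle-R\bigr)^2\,d\mathcal H^{N-1}$ to the right-hand side, a direct comparison shows that the claimed identity is equivalent to Theorem~\ref{teo first identity} together with the vanishing of the boundary quantity
\[
Q:=\beta\int_{\partial\Omega}|\nabla^\tau u|^2\,d\mathcal H^{N-1}+(N-1)\int_{\partial\Omega}\langle\nabla u,\nu\rangle\bigl(\langle\nabla u,\nu\rangle\mathcal M-1\bigr)\,d\mathcal H^{N-1}-\beta\int_{\partial\Omega}\bigl(\langle\nabla u,\nu\rangle-R\bigr)^2\,d\mathcal H^{N-1}.
\]
Hence the whole proof reduces to checking that $Q=0$.

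First I would rewrite the overdetermined condition purely in terms of $\langle\nabla u,\nu\rangle$. On $\partial\Omega$ the Robin condition gives $u=-\langle\nabla u,\nu\rangle/\beta$, while the orthogonal decomposition yields $|\nabla u|^2=|\nabla^\tau u|^2+\langle\nabla u,\nu\rangle^2$. Inserting both into the Serrin--Robin condition and simplifying, it becomes
\[
|\nabla^\tau u|^2-\langle\nabla u,\nu\rangle^2-\frac{2N}{\beta}\langle\nabla u,\nu\rangle+\frac{N-1}{\beta}\langle\nabla u,\nu\rangle^2\mathcal M=C\qquad\text{on }\partial\Omega,
\]
so that
\[
\beta|\nabla^\tau u|^2=\beta C+\beta\langle\nabla u,\nu\rangle^2+2N\langle\nabla u,\nu\rangle-(N-1)\langle\nabla u,\nu\rangle^2\mathcal M\qquad\text{on }\partial\Omega.
\]

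Next I would substitute this expression for $\beta|\nabla^\tau u|^2$ into the integrand defining $Q$, expanding $\langle\nabla u,\nu\rangle\bigl(\langle\nabla u,\nu\rangle\mathcal M-1\bigr)$ and $\bigl(\langle\nabla u,\nu\rangle-R\bigr)^2$. The two mean-curvature terms $\pm(N-1)\langle\nabla u,\nu\rangle^2\mathcal M$ cancel pointwise, and so do the two $\beta\langle\nabla u,\nu\rangle^2$ terms; the surviving integrand is affine in $\langle\nabla u,\nu\rangle$, namely $(N+1+2\beta R)\langle\nabla u,\nu\rangle+\beta C-\beta R^2$. Integrating over $\partial\Omega$ and using \eqref{eq property of int u nu} in the form $\int_{\partial\Omega}\langle\nabla u,\nu\rangle\,d\mathcal H^{N-1}=R\,|\partial\Omega|$, I obtain $Q=\bigl[(N+1)R+\beta R^2+\beta C\bigr]|\partial\Omega|$, which vanishes exactly because $C=-R^2-R(N+1)/\beta$.

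The only real work is the algebraic substitution in the third step; the single point requiring care is that the cancellation of the curvature contribution $(N-1)\langle\nabla u,\nu\rangle^2\mathcal M$ hinges on using the Robin relation to eliminate $u$ before inserting the overdetermined condition, and on matching the prescribed value of $C$ with the normalization $R=N|\Omega|/|\partial\Omega|$ so that the leftover affine term integrates to zero.
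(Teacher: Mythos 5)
Your proof is correct and follows essentially the same route as the paper: both start from the Fundamental Identity of Theorem~\ref{teo first identity}, eliminate $u$ via the Robin condition, exploit the overdetermined condition (you pointwise, the paper after multiplying by $\beta$ and integrating), and close the argument with the divergence identity \eqref{eq property of int u nu}. The only difference is organizational --- you isolate the deficit $Q$ and cancel terms pointwise, whereas the paper substitutes for the integrated mean-curvature term --- but the algebra is the same.
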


\begin{proof}
    We multiply the overdetermined condition by $\beta$ and we integrate over $\partial\Omega$. We get
    \begin{align*}
        \beta \int_{\partial\Omega} |\nabla u|^2 \,d\mathcal H^{N-1}  +  2 N \int_{\partial\Omega} \beta u \,d\mathcal H^{N-1} &-  2 \beta \int_{\partial\Omega} \beta^2 u^2 \,d\mathcal H^{N-1}
        \\
        &+ (N-1) \int_{\partial\Omega} \beta^2 u^2 \mathcal{M} \,d\mathcal H^{N-1}
        = C \beta |\partial\Omega|.
    \end{align*}
    Hence, by the definition of $C$, we obtain that
    \begin{align*}
        (N-1) \int_{\partial\Omega} \langle \nabla u,\nu \rangle^2 \mathcal{M} \,d\mathcal H^{N-1}
        = &\left( - R^2 - R \left(\frac{N+1}{\beta}\right) \right) \beta |\partial\Omega|
        \\
        &-\beta \int_{\partial\Omega} |\nabla u|^2 \,d\mathcal H^{N-1}  -  2 N \int_{\partial\Omega} \beta u \,d\mathcal H^{N-1} 
        \\
        &+  2 \beta \int_{\partial\Omega} \langle \nabla u,\nu \rangle^2 \,d\mathcal H^{N-1}.
    \end{align*}
    We now substitute the above term into the identity provided by Theorem \ref{teo first identity}. We deduce that
    \begin{align*}
        \int_{\Omega} \Delta P \, dx = &- \int_{\partial\Omega} \langle (\nabla \nu ) \nabla^\tau u, \nabla^\tau u \rangle \,d\mathcal H^{N-1} - 2 \beta \int_{\partial\Omega} |\nabla^\tau u|^2 \,d\mathcal H^{N-1}
        \\
        &+ R^2 \beta |\partial\Omega| + R(N+1) |\partial\Omega| 
        +  \beta \int_{\partial\Omega} |\nabla u|^2 \,d\mathcal H^{N-1} + 2 N \int_{\partial\Omega} \beta u \,d\mathcal H^{N-1} 
        \\
        &-  2 \beta \int_{\partial\Omega} \langle \nabla u,\nu \rangle^2 \,d\mathcal H^{N-1} + (N-1) \int_{\partial\Omega} \langle \nabla u,\nu \rangle \,d\mathcal H^{N-1}
        \\
        = &- \int_{\partial\Omega} \langle (\nabla \nu ) \nabla^\tau u, \nabla^\tau u \rangle \,d\mathcal H^{N-1} - \beta \int_{\partial\Omega} |\nabla^\tau u|^2 \,d\mathcal H^{N-1} 
        \\
        &- \beta \int_{\partial\Omega} \langle \nabla u,\nu \rangle^2 \,d\mathcal H^{N-1}
        + R^2 \beta |\partial\Omega| 
        \\
        &+ R(N+1) |\partial\Omega| 
        - (N+1) \int_{\partial\Omega} \langle \nabla u,\nu \rangle \,d\mathcal H^{N-1}.
    \end{align*}
    To conclude we simply notice that, by \eqref{eq property of int u nu}, we have
    \[
     R(N+1) |\partial\Omega| 
        - (N+1) \int_{\partial\Omega} \langle \nabla u,\nu \rangle \,d\mathcal H^{N-1} = 0,
    \]
    while
    \[
    - \beta \int_{\partial\Omega} \langle \nabla u,\nu \rangle^2 \,d\mathcal H^{N-1}
        + R^2 \beta |\partial\Omega| = - \beta \int_{\partial\Omega} \Big( \langle \nabla u,\nu \rangle^2 -R^2 \Big) \,d\mathcal H^{N-1}.
    \]
    Hence, using the trivial relation
    \begin{equation*}
        \langle \nabla u, \nu \rangle^2 - R^2 = \Big( \langle \nabla u ,\nu \rangle - R \Big)^2 + 2 \langle \nabla u, \nu \rangle R - 2R^2,
    \end{equation*}
    we deduce that 
    \begin{align*}
        \int_{\Omega}& \Delta P \, dx = - \int_{\partial\Omega} \langle (\nabla \nu ) \nabla^\tau u, \nabla^\tau u \rangle \,d\mathcal H^{N-1} -  \beta \int_{\partial\Omega} |\nabla^\tau u|^2 \,d\mathcal H^{N-1} 
        \\
        & - \beta \int_{\partial\Omega} \Big( \langle \nabla u ,\nu \rangle - R \Big)^2 \,d\mathcal H^{N-1} - 2 \beta \int_{\partial\Omega} \langle \nabla u ,\nu \rangle R \,d\mathcal H^{N-1} + 2 \beta  \int_{\partial\Omega}  R^2 \,d\mathcal H^{N-1},
    \end{align*}
    and the identity of the statement follows again by an application of \eqref{eq property of int u nu} to the last two terms on the right hand-side.
\end{proof}

\section{Overdetermined problems}
\label{sec overdet problem}
In this section we are going to study some quoted overdetermined problems and prove the main results. As mentioned in the Introduction, by using the identity of Theorem \ref{thm: identity Serrin}, we get the following generalization of the celebrated Serrin's Theorem.

\begin{teo}[Rigidity for the Serrin's problem with Robin boundary conditions]\label{thm Serrin rigidity}
    Let $\Omega \subset \R^N$, $N\ge 2$, be an open, connected, bounded set with $C^2$ boundary. Let $\beta > 0$ and let $u \in C^2(\overline{\Omega})$ be a solution of
    \begin{equation}\label{SpR}
        \begin{cases}
        \Delta u = N &\text{in } \Omega,
        \\
        \displaystyle \frac{\partial u}{\partial \nu}+ \beta u = 0 &\text{on } \partial\Omega,
        \\
        |\nabla u|^2 + 2Nu - 2\beta^2 u^2 + (N-1) \beta u^2 \mathcal{M} = C &\text{on } \partial\Omega,
      \end{cases}
    \end{equation}
    with 
    \begin{equation*}
        R = \frac{N |\Omega|}{|\partial\Omega|} \quad\text{and}\quad C = - R^2 - R \left(\frac{N+1}{\beta}\right).
    \end{equation*}
    Let $ k_{\min}$ be as in \eqref{k}. If
    \begin{equation}
    \label{sc}
    \beta+k_{\min}\ge 0
    \end{equation}
     then $\Omega$ is a ball and $u$ is radially symmetric.
\end{teo}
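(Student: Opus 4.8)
The plan is to combine the integral identity of Theorem~\ref{thm: identity Serrin} with the classical Weinberger-type subharmonicity of the $P$-function. First I would compute the Laplacian of $P = \frac{1}{2}|\nabla u|^2 - u$. Since $\Delta u = N$ is constant, a direct computation gives $\Delta P = |\nabla^2 u|^2 - \Delta u = |\nabla^2 u|^2 - N$. By the Cauchy--Schwarz inequality applied to $\nabla^2 u$ and the identity matrix one has $|\nabla^2 u|^2 \ge \frac{(\operatorname{tr}\nabla^2 u)^2}{N} = \frac{(\Delta u)^2}{N} = N$, so that $\Delta P \ge 0$ in $\Omega$, with equality at a point precisely when $\nabla^2 u = I$ there.

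Next I would feed this into the identity of Theorem~\ref{thm: identity Serrin}, which already absorbs the overdetermined condition and the prescribed values of $R$ and $C$. Rearranging it yields
\begin{equation*}
\int_{\Omega}\Delta P\,dx = -\beta\int_{\partial\Omega}\big(\langle\nabla u,\nu\rangle - R\big)^2\,d\mathcal H^{N-1} - \int_{\partial\Omega}\langle(\nabla\nu)\nabla^\tau u,\nabla^\tau u\rangle\,d\mathcal H^{N-1} - \beta\int_{\partial\Omega}|\nabla^\tau u|^2\,d\mathcal H^{N-1}.
\end{equation*}
To control the middle term I would invoke the lower bound \eqref{stima} of Remark~\ref{remark curvature}, namely $\langle(\nabla\nu)\nabla^\tau u,\nabla^\tau u\rangle \ge \kappa_{\min}|\nabla^\tau u|^2$ on $\partial\Omega$, obtaining
\begin{equation*}
\int_{\Omega}\Delta P\,dx \le -\beta\int_{\partial\Omega}\big(\langle\nabla u,\nu\rangle - R\big)^2\,d\mathcal H^{N-1} - (\beta + \kappa_{\min})\int_{\partial\Omega}|\nabla^\tau u|^2\,d\mathcal H^{N-1}.
\end{equation*}
Here the hypotheses enter decisively: since $\beta > 0$ and, by \eqref{sc}, $\beta + \kappa_{\min} \ge 0$, both integrals on the right are nonnegative, whence $\int_{\Omega}\Delta P\,dx \le 0$.

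Combining the two inequalities, $\int_\Omega \Delta P\,dx = 0$ while $\Delta P \ge 0$ pointwise; therefore $\Delta P \equiv 0$ in $\Omega$. The equality case of Cauchy--Schwarz then forces $\nabla^2 u = I$ everywhere, so integrating gives $\nabla u = x - z$ for some fixed $z \in \R^N$ and $u = \frac{1}{2}|x-z|^2 + c$. Moreover, the vanishing of the right-hand side forces $\langle\nabla u,\nu\rangle = R$ on $\partial\Omega$; substituting this into the Robin condition $\langle\nabla u,\nu\rangle + \beta u = 0$ yields $u \equiv -R/\beta$ on $\partial\Omega$. Since $u = \frac{1}{2}|x-z|^2 + c$, this means $|x-z|^2$ is constant on $\partial\Omega$, so $\partial\Omega$ is a sphere centered at $z$; by connectedness $\Omega$ is the corresponding ball and $u$ coincides with the radial solution \eqref{eq q radial}.

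I expect the only genuinely delicate point to be the sign bookkeeping in the second step: one must verify that the combination of the two boundary integrals produced by Theorem~\ref{thm: identity Serrin}, after applying the curvature bound, carries a definite nonpositive sign, and this is exactly where both $\beta > 0$ and the structural hypothesis \eqref{sc} are used in an essential way. The remainder is the standard rigidity mechanism, namely that a nonnegative integrand with vanishing integral must vanish identically and that the Cauchy--Schwarz equality case pins down the Hessian.
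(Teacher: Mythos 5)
Your proposal is correct and takes essentially the same route as the paper's own proof: it combines the identity of Theorem~\ref{thm: identity Serrin} with the subharmonicity $\Delta P = |\nabla^2 u|^2 - (\Delta u)^2/N \ge 0$ from Cauchy--Schwarz, uses the curvature bound \eqref{stima} together with $\beta>0$ and \eqref{sc} to force both sides to vanish, and then concludes via $\nabla^2 u = I$ and $\langle \nabla u,\nu\rangle = R$ plus the Robin condition that $u$ is constant on $\partial\Omega$, so $\Omega$ is a ball. The only difference is cosmetic (you move the term $\beta\int_{\partial\Omega}\big(\langle\nabla u,\nu\rangle - R\big)^2\,d\mathcal H^{N-1}$ to the right-hand side rather than keeping it on the left), and your avoidance of the claim $|\nabla^\tau u|=0$ on $\partial\Omega$ is if anything slightly cleaner in the borderline case $\beta+\kappa_{\min}=0$.
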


\begin{proof}
    It suffices to apply the identity of Theorem \ref{thm: identity Serrin}. On one hand, it holds
    \begin{equation*}
        \Delta P=|\nabla^2 u |^2 - \frac{(\Delta u)^2}{N} ,
    \end{equation*}
    and then $\Delta P$ is non-negative by Cauchy-Schwarz inequality. 
    Since $\beta > 0$, we readily deduce that
    \begin{equation*}
        \int_{\Omega} \Delta P \, dx + \beta \int_{\partial\Omega} \Big( \langle \nabla u ,\nu \rangle - R \Big)^2 \,d\mathcal H^{N-1} \geq 0.
    \end{equation*}
    On the other hand, by \eqref{stima} and by the hypothesis \eqref{sc}, we get
    \begin{equation*}
    \begin{split}
        - \int_{\partial\Omega} \langle (\nabla \nu ) \nabla^\tau u, \nabla^\tau u \rangle \,d\mathcal H^{N-1} 
        &- \beta \int_{\partial\Omega} |\nabla^\tau u|^2 \,d\mathcal H^{N-1} 
        \\
        & \leq - (k_{\min} + \beta) \int_{\partial\Omega} |\nabla^\tau u|^2 \,d\mathcal H^{N-1}   
        \leq 0.
    \end{split}
    \end{equation*}
    Hence,
    \begin{equation*}
        \Delta P = 0 \quad\text{in }\Omega,
    \end{equation*}
    and
     \begin{equation}
     \label{ball}
        \langle \nabla u ,\nu \rangle=R \quad\text{ and }\quad  |\nabla^\tau u|=0 \quad  \text{ on }\partial\Omega.
    \end{equation}
   Being $\Delta P=0$ and $\Delta u=N$, then $u$ is a quadratic polynomial of the form 
    \begin{equation*}
        u(x) = \frac{1}{2} \left( |x-z|^2 + a \right) \quad \text{for all } x \in \overline{\Omega}, 
    \end{equation*}
    for suitable fixed $z \in \Omega$ and $a \in \R$.
    Moreover by \eqref{ball} and by the Robin boundary conditions, $u$ is constant on the boundary then $\Omega=B_R(z) $  and $a = -\frac{2R}{\beta} - R^2$ (cf. \eqref{eq q radial}).
   The proof is complete.
\end{proof}
\begin{rem}
We stress that the previous rigidity result holds in particular for convex sets and for any $\beta >0$.
Moreover we notice that, for $\beta \to +\infty$, condition \eqref{sc} is trivial and   the problem \eqref{SpR} converges to the  Serrin problem
    \begin{equation*}
        \begin{cases}
        \Delta u = N &\text{in } \Omega,
        \\
        u = 0 &\text{on } \partial\Omega,\\
        \displaystyle \frac{\partial u}{\partial \nu} = R &\text{on } \partial\Omega.
      \end{cases}
    \end{equation*}
    Then our result recovers the one known in the Dirichlet case.
\end{rem}

Then, the identity of Theorem \ref{thm: identity for SBT} provides an alternative proof of the celebrated Alexandrov's Theorem using a suitable Robin problem.

\begin{teo}[Soap Bubble Theorem]
    Let $\Omega$ be an open, connected, bounded subset of $\R^N$ with $C^2$ boundary. Let
    \begin{equation*}
        R = \frac{N |\Omega|}{|\partial\Omega|} \quad\text{and}\quad \mathcal{M}_0 = \frac{1}{R}.
    \end{equation*}
    If the mean curvature $\mathcal{M}$ of $\partial \Omega$ satisfies the inequality 
    \begin{equation*}
        \mathcal{M} \geq \mathcal{M}_0 \quad \text{on } \partial\Omega,
    \end{equation*}
    then $\partial\Omega$ must be a sphere (and hence $\Omega$ is a ball) of radius $R$.
    In particular, the same conclusion holds if $\mathcal M$ equals some constant on $\partial\Omega$.
\end{teo}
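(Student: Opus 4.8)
The plan is to \emph{manufacture} an auxiliary Robin torsion problem on the \emph{given} domain $\Omega$, whose Robin parameter $\beta>0$ is a free quantity at our disposal, and then to feed the resulting solution into the identity of Theorem~\ref{thm: identity for SBT}. Since $\Omega$ has compact $C^2$ boundary, the quantity $\kappa_{\min}$ defined in \eqref{k} is a finite real number, so I would first fix any $\beta>0$ large enough that
\[
2\beta + \kappa_{\min} \ge 0
\]
(for convex $\Omega$ we have $\kappa_{\min}\ge 0$ and any $\beta>0$ works; in general it suffices to take $\beta \ge -\kappa_{\min}/2$). Because $\beta>0$, the Steklov eigenvalues satisfy $-\mu_i(\Omega)\le 0<\beta$, so condition \eqref{beta condition} holds and problem \eqref{PbR} has a unique solution $u\in C^2(\overline\Omega)$; I set $P$ as in \eqref{eq P}.

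Next I would apply Theorem~\ref{thm: identity for SBT} to this $u$ and track signs. On the left-hand side, $\Delta P = |\nabla^2 u|^2 - (\Delta u)^2/N \ge 0$ by Cauchy--Schwarz, and since $\mathcal{M}_0 = 1/R > 0$ the boundary integral $(N-1)\int_{\partial\Omega}(\langle\nabla u,\nu\rangle - R)^2\mathcal{M}_0\,d\mathcal{H}^{N-1}$ is nonnegative; hence the whole left-hand side is $\ge 0$. On the right-hand side, the lower bound in \eqref{stima} gives $\langle(\nabla\nu)\nabla^\tau u,\nabla^\tau u\rangle \ge \kappa_{\min}|\nabla^\tau u|^2$, so the first two terms are bounded above by $-(2\beta+\kappa_{\min})\int_{\partial\Omega}|\nabla^\tau u|^2\,d\mathcal{H}^{N-1}\le 0$ by our choice of $\beta$, while the last term $-(N-1)\int_{\partial\Omega}\langle\nabla u,\nu\rangle^2(\mathcal{M}-\mathcal{M}_0)\,d\mathcal{H}^{N-1}$ is $\le 0$ precisely because of the hypothesis $\mathcal{M}\ge\mathcal{M}_0$. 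Thus the right-hand side is $\le 0$.

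The identity then forces both sides to vanish, and in particular each nonnegative term on the left is zero. From $\int_\Omega\Delta P\,dx = 0$ and $\Delta P\ge 0$ I get $\Delta P\equiv 0$, i.e.\ equality in Cauchy--Schwarz, whence $\nabla^2 u = I$ and $u(x)=\tfrac12|x-z|^2 + a$ for some $z\in\R^N$, $a\in\R$. Vanishing of the boundary term (using $\mathcal{M}_0>0$) yields $\langle\nabla u,\nu\rangle = R$ on $\partial\Omega$. The Robin condition then gives $u = -\langle\nabla u,\nu\rangle/\beta = -R/\beta$ constant on $\partial\Omega$, so $|x-z|^2$ is constant there and $\partial\Omega$ is a sphere centered at $z$; evaluating $\langle\nabla u,\nu\rangle = \langle x-z,\nu\rangle$ on a sphere of radius $\rho$ shows $\rho = R$, so $\Omega = B_R(z)$, exactly as in the proof of Theorem~\ref{thm Serrin rigidity}.

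Finally, for the ``in particular'' claim I would reduce the case of constant $\mathcal{M}$ to the hypothesis $\mathcal{M}\ge\mathcal{M}_0$ via the first Minkowski formula. Taking $f\equiv 1$ and $v(x)=x$ in Theorem~\ref{thm: divergence part Om} and using $\div_\tau x = \div x - \langle(\nabla x)\nu,\nu\rangle = N-1$ gives $|\partial\Omega| = \int_{\partial\Omega}\langle x,\nu\rangle\mathcal{M}\,d\mathcal{H}^{N-1}$; if $\mathcal{M}\equiv c$, then together with $\int_{\partial\Omega}\langle x,\nu\rangle\,d\mathcal{H}^{N-1} = N|\Omega|$ this forces $c = |\partial\Omega|/(N|\Omega|) = \mathcal{M}_0$, so the hypothesis is met with equality. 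The delicate point of the whole argument is the \emph{existence and tuning of the auxiliary parameter} $\beta$: the rigidity is extracted not from a canonically given overdetermined problem but from one we build, and the sign of the curvature term in the identity is controlled only after choosing $\beta$ large enough---this is precisely where the finiteness of $\kappa_{\min}$, i.e.\ the $C^2$ regularity and boundedness of $\partial\Omega$, enters.
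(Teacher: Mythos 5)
Your proposal is correct and follows essentially the same route as the paper's own proof: fix an auxiliary $\beta>0$ with $2\beta+\kappa_{\min}\ge 0$, apply the identity of Theorem~\ref{thm: identity for SBT} to the Robin solution, conclude rigidity from the sign analysis as in Theorem~\ref{thm Serrin rigidity}, and reduce the constant-$\mathcal{M}$ case to $\mathcal{M}\ge\mathcal{M}_0$ via Minkowski's identity. The only (welcome) additions are details the paper leaves implicit: the verification that $\beta>0$ guarantees \eqref{beta condition}, the explicit rigidity endgame, and the derivation of Minkowski's formula from Theorem~\ref{thm: divergence part Om}.
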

\begin{proof}
    The claim follows by using the identity provided by Theorem \ref{thm: identity for SBT}. Indeed, we can fix a parameter $\beta>0$ such that $\kappa_{\min} + 2\beta \geq 0$ and consider the solution  $u \in C^2(\bar{\Omega})$  to the problem \eqref{PbR}. Then, from the identity of Theorem \ref{thm: identity for SBT} and being $\Delta P \geq 0$ in $\Omega$, one readily deduces that 
    \begin{equation*}
        \int_{\Omega} \Delta P \, dx +   (N-1)\int_{\partial\Omega} \Big( \langle \nabla u ,\nu \rangle - R \Big)^2 \mathcal{M}_0 \,d\mathcal H^{N-1} \geq 0.
    \end{equation*}
    On the other hand, by the hypothesis $\mathcal{M} \geq \mathcal{M}_0$ on $\partial\Omega$ and by the assumption on $\beta$, we deduce that 
    \begin{align*}
        - \int_{\partial\Omega} \langle (\nabla \nu ) \nabla^\tau u, \nabla^\tau u \rangle \,d\mathcal H^{N-1} &- 2 \beta \int_{\partial\Omega} |\nabla^\tau u|^2 \,d\mathcal H^{N-1} 
        \\
        & - (N-1) \int_{\partial\Omega} \langle \nabla u, \nu \rangle^2 \Big( \mathcal{M} -  \mathcal{M}_0 \Big) \,d\mathcal H^{N-1} \leq 0,
    \end{align*}
    and the conclusion follows as in the last part of the proof of Theorem \ref{thm Serrin rigidity}.

    Finally, if $\mathcal{M}$ equals some constant on $\partial\Omega$, then the Minkowski's identity
    \begin{equation*}
        \int_{\partial\Omega} \mathcal{M} \langle x-z, \nu \rangle \,d\mathcal H^{N-1} = |\partial\Omega| 
    \end{equation*}
    implies that $\mathcal{M} = \mathcal{M}_0$ and we conclude.
    
\end{proof}

\begin{rem}
    We notice that the same theorem was proven in \cite[Theorem 2.2]{mp19} using the Dirichlet torsion problem, i.e. the solution to the following
    \begin{equation*}
        \begin{cases}
        \Delta u = N &\text{in } \Omega,
        \\
        u = 0 &\text{on } \partial\Omega.
      \end{cases}
    \end{equation*}
Here we obtain the same result using a suitable Robin torsion problem.
\end{rem}

Finally a result in the spirit of \cite[Theorem 2.4]{mp19}.

\begin{teo}
    Let $\Omega$ be an open, connected, bounded subset of $\R^N$ with $C^2$ boundary. Let $\beta \in \mathbb{R}\setminus \{0\}$ with \eqref{beta condition} in force. Let $u\in C^2(\overline{\Omega})$ be a solution to the following overdetermined problem  
    \begin{equation}\label{op_curv 1}
    \begin{cases}
    \Delta u = N &\text{in } \Omega,
    \\
    \dfrac{\partial u}{\partial \nu}+ \beta u = 0 &\text{on } \partial\Omega,
    \\
    \dfrac{\partial u}{\partial \nu} \mathcal{M} = 1 &\text{on } \partial\Omega.
    \end{cases}
\end{equation}
If $\displaystyle \kappa_{\min} + 2\beta > 0$, then $\Omega$ is a ball and $u$ is radially symmetric.
\end{teo}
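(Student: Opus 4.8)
The plan is to read off the result directly from the Fundamental Identity of Theorem~\ref{teo first identity}, using the third condition in \eqref{op_curv 1} to annihilate its curvature term. Writing $\partial u/\partial\nu = \langle \nabla u, \nu\rangle$, the overdetermined condition $\langle \nabla u, \nu\rangle\,\mathcal{M} = 1$ gives $\langle \nabla u, \nu\rangle\big(\langle \nabla u, \nu\rangle\,\mathcal{M} - 1\big) = 0$ on $\partial\Omega$, so the last integral in that identity vanishes and one is left with
\begin{equation*}
\int_\Omega \Delta P\,dx = -\int_{\partial\Omega}\langle(\nabla\nu)\nabla^\tau u,\nabla^\tau u\rangle\,d\mathcal H^{N-1} - 2\beta\int_{\partial\Omega}|\nabla^\tau u|^2\,d\mathcal H^{N-1}.
\end{equation*}

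First I would observe, exactly as in the proof of Theorem~\ref{thm Serrin rigidity}, that $\Delta P = |\nabla^2 u|^2 - (\Delta u)^2/N \ge 0$ by Cauchy--Schwarz, so the left-hand side is non-negative. Next I would bound the right-hand side from above via the lower estimate in \eqref{stima}, namely $\langle(\nabla\nu)\nabla^\tau u,\nabla^\tau u\rangle \ge \kappa_{\min}|\nabla^\tau u|^2$, obtaining
\begin{equation*}
0 \le \int_\Omega \Delta P\,dx \le -(\kappa_{\min} + 2\beta)\int_{\partial\Omega}|\nabla^\tau u|^2\,d\mathcal H^{N-1} \le 0,
\end{equation*}
where the last step uses the hypothesis $\kappa_{\min} + 2\beta > 0$. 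This chain forces every inequality to be an equality.

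From $\int_\Omega\Delta P\,dx = 0$ together with $\Delta P \ge 0$ I would deduce $\Delta P \equiv 0$, i.e. equality in Cauchy--Schwarz, so $\nabla^2 u = (\Delta u/N)\,I = I$ and hence $u(x) = \tfrac12(|x-z|^2 + a)$ for some $z \in \Omega$ and $a \in \R$. From the vanishing of the right-hand side, together with the strict positivity of $\kappa_{\min} + 2\beta$, I would then conclude $|\nabla^\tau u| = 0$ on $\partial\Omega$, so that $u$ is constant there; therefore $\partial\Omega$ lies in a level set of the radial quadratic $u$, which is a sphere centered at $z$, and consequently $\Omega$ is a ball and $u$ is radially symmetric.

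The argument is essentially a specialization of that of Theorem~\ref{thm Serrin rigidity}, and the only delicate point I would flag is the role of the \emph{strict} inequality $\kappa_{\min} + 2\beta > 0$. In Theorem~\ref{thm Serrin rigidity} the extra left-hand term $\beta\int_{\partial\Omega}(\langle\nabla u,\nu\rangle - R)^2$ supplied independent rigidity, which permitted the weaker condition $\beta + \kappa_{\min} \ge 0$; here no such term is available, so the spherical conclusion must come solely from $|\nabla^\tau u| = 0$, and extracting this from the final integral genuinely requires the coefficient $\kappa_{\min} + 2\beta$ to be positive rather than merely non-negative.
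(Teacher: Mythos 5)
Your proposal is correct and follows essentially the same route as the paper: both apply the Fundamental Identity of Theorem~\ref{teo first identity}, use the overdetermined condition $\langle\nabla u,\nu\rangle\mathcal{M}=1$ to annihilate the curvature term, squeeze via \eqref{stima} and $\kappa_{\min}+2\beta>0$ to get $\Delta P\equiv 0$ and $\nabla^\tau u=0$ on $\partial\Omega$, and then conclude as in Theorem~\ref{thm Serrin rigidity}. Your closing observation on why the strict inequality is needed here (whereas Theorem~\ref{thm Serrin rigidity} tolerates $\beta+\kappa_{\min}\ge 0$ thanks to the term $\beta\int_{\partial\Omega}(\langle\nabla u,\nu\rangle-R)^2$) is accurate and consistent with the paper's hypotheses.
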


\begin{proof}
    The argument is the same as before, simply using the identity of Theorem \ref{teo first identity}. It suffices to notice that, being $\langle \nabla u, \nu \rangle \mathcal{M} = 1$ on $\partial \Omega$ and $\displaystyle \kappa_{\min} + 2\beta > 0$, one obtains
    \begin{align*}
        0 \leq \int_{\Omega} \Delta P \, dx &= - \int_{\partial\Omega} \langle (\nabla \nu ) \nabla^\tau u, \nabla^\tau u \rangle \,d\mathcal H^{N-1} - 2 \beta \int_{\partial\Omega} |\nabla^\tau u|^2 \,d\mathcal H^{N-1} 
        \\
        & \quad - (N-1) \int_{\partial\Omega} \langle \nabla u, \nu \rangle \Big( \langle \nabla u, \nu \rangle \mathcal{M} - 1 \Big) \,d\mathcal H^{N-1}
        \\
        &
        \leq - (k_{\min} + 2\beta) \int_{\partial\Omega} |\nabla^\tau u|^2 \,d\mathcal H^{N-1}   
        \leq 0,
    \end{align*}
    and the conclusion follows again as in the last part of the proof of Theorem \ref{thm Serrin rigidity}. 
    
    We mention that, from $\nabla^\tau u = 0$ and by the Robin boundary conditions, the possibility that $\langle \nabla u, \nu \rangle$ equals a constant on $\partial\Omega$ different from $R$ is ruled out by the Divergence Theorem, being $\Delta u = N$ in $\Omega$.
\end{proof}

We conclude with a rigidity result involving an overdetermined problem, where the extra condition concerns the second normal derivative of the solution on the boundary. As mentioned in the Introduction, in contrast with the Dirichlet case, the overdetermined condition in problem \eqref{op_curv 2} below is not clear to be equivalent to the overdetermined condition in problem \eqref{op_curv 1}, in the Robin setting.

\begin{teo}
    Let $\Omega$ be an open, connected, bounded subset of $\R^N$ with $C^2$ boundary. Let $\beta \in \mathbb{R}\setminus \{0\}$ with \eqref{beta condition} in force. Let $u\in C^2(\overline{\Omega})$ be a solution to the following overdetermined problem 
    \begin{equation}\label{op_curv 2}
    \begin{cases}
    \Delta u = N &\text{in } \Omega,
    \\
    \dfrac{\partial u}{\partial \nu}+ \beta u = 0 &\text{on } \partial\Omega,
    \\
    \displaystyle \langle \nabla^2 u \, \nu , \nu \rangle = 1 &\text{on } \partial\Omega,
    \end{cases}
\end{equation}
If $\displaystyle \kappa_{\min} + \beta > 0$, then $\Omega$ is a ball and $u$ is radially symmetric. 
    
\end{teo}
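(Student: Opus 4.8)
The plan is to imitate the proof of Theorem \ref{thm Serrin rigidity}, but to feed the second-order boundary datum $\langle\nabla^2 u\,\nu,\nu\rangle=1$ into the Fundamental Identity of Theorem \ref{teo first identity}. The guiding idea is that, via Reilly's identity \eqref{eq decomposition Delta u}, this condition can be traded for information on the tangential Laplacian $\Delta_\tau u$, which then couples with the Robin boundary condition to collapse the whole right-hand side into a single sign-definite tangential term.

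Concretely, I would first evaluate Reilly's identity \eqref{eq decomposition Delta u} on $\partial\Omega$. Since $\Delta u = N$ and $\langle\nabla^2 u\,\nu,\nu\rangle=1$, it gives
\[
(N-1)\bigl(\langle\nabla u,\nu\rangle\mathcal{M}-1\bigr) = -\Delta_\tau u \qquad\text{on }\partial\Omega.
\]
Substituting this into the last integrand of the Fundamental Identity of Theorem \ref{teo first identity} converts the curvature term $-(N-1)\int_{\partial\Omega}\langle\nabla u,\nu\rangle(\langle\nabla u,\nu\rangle\mathcal{M}-1)$ into $\int_{\partial\Omega}\langle\nabla u,\nu\rangle\,\Delta_\tau u$. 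I would then use the Robin condition $\langle\nabla u,\nu\rangle=-\beta u$ to rewrite this as $-\beta\int_{\partial\Omega}u\,\Delta_\tau u$ and integrate by parts on $\partial\Omega$ (Theorem \ref{thm: divergence part Om} with $f=v=u$), which produces $+\beta\int_{\partial\Omega}|\nabla^\tau u|^2$. Collecting the terms, the Fundamental Identity reduces to
\[
\int_{\Omega}\Delta P\,dx = -\int_{\partial\Omega}\langle(\nabla\nu)\nabla^\tau u,\nabla^\tau u\rangle\,d\mathcal{H}^{N-1} - \beta\int_{\partial\Omega}|\nabla^\tau u|^2\,d\mathcal{H}^{N-1},
\]
that is, the coefficient of $\int|\nabla^\tau u|^2$ drops from $2\beta$ down to $\beta$.

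From here the argument is identical to Theorem \ref{thm Serrin rigidity}. On the one hand $\Delta P = |\nabla^2 u|^2 - (\Delta u)^2/N \ge 0$ by Cauchy-Schwarz, so the left-hand side is nonnegative. On the other hand, the estimate \eqref{stima} gives $\langle(\nabla\nu)\nabla^\tau u,\nabla^\tau u\rangle\ge\kappa_{\min}|\nabla^\tau u|^2$, so the right-hand side is at most $-(\kappa_{\min}+\beta)\int_{\partial\Omega}|\nabla^\tau u|^2$, which is $\le 0$ precisely because $\kappa_{\min}+\beta>0$. Both sides must therefore vanish, giving $\Delta P\equiv 0$ in $\Omega$ and $\nabla^\tau u\equiv 0$ on $\partial\Omega$. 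Equality in Cauchy-Schwarz together with $\Delta u=N$ forces $\nabla^2 u$ to be the identity, hence $u(x)=\tfrac12|x-z|^2+a$; since $\nabla^\tau u=0$ makes $u$ constant on $\partial\Omega$, the boundary is a sphere and $\Omega$ is a ball, exactly as in the closing lines of Theorem \ref{thm Serrin rigidity}.

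I expect the only delicate point to be the conversion in the second paragraph, and more precisely the bookkeeping of signs: what makes the theorem work under the weaker hypothesis $\kappa_{\min}+\beta>0$ (rather than $\kappa_{\min}+2\beta>0$ as in problem \eqref{op_curv 1}) is that the integration by parts against the Robin condition \emph{returns} a favorable $+\beta\int|\nabla^\tau u|^2$, cancelling one of the two $\beta$'s already present in the Fundamental Identity. Everything else is routine, and the rigidity conclusion is verbatim the one established above.
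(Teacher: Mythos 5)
Your proof is correct and takes essentially the same route as the paper: the paper obtains the reduced identity $\int_\Omega \Delta P\,dx = -\int_{\partial\Omega}\langle(\nabla\nu)\nabla^\tau u,\nabla^\tau u\rangle\,d\mathcal H^{N-1} - \beta\int_{\partial\Omega}|\nabla^\tau u|^2\,d\mathcal H^{N-1}$ by stopping the derivation inside the proof of Theorem \ref{teo first identity} before the Reilly substitution and inserting $\langle\nabla^2 u\,\nu,\nu\rangle = 1$ through Remark \ref{remark curvature}, whereas you start from the stated Fundamental Identity and undo that substitution (Reilly's identity plus one tangential integration by parts against the Robin condition) --- the same computation traversed in the opposite direction, landing on the identical identity, after which your sign argument and rigidity conclusion coincide with the paper's. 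One small inaccuracy in your closing commentary, immaterial to the proof: for $\beta>0$ the hypothesis $\kappa_{\min}+\beta>0$ is \emph{stronger} than $\kappa_{\min}+2\beta>0$, not weaker; the comparison you state holds only when $\beta<0$.
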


\begin{proof}
    Simply following the proof of Theorem \ref{teo first identity} along with the Remark \ref{remark curvature}, one readily obtains that
    \begin{align*}
        0 \leq \int_{\Omega} \Delta P \, dx &= - \int_{\partial\Omega} \langle (\nabla \nu ) \nabla^\tau u, \nabla^\tau u \rangle \,d\mathcal H^{N-1} - \beta \int_{\partial\Omega} |\nabla^\tau u|^2 \,d\mathcal H^{N-1} 
        \\
        & \quad - \int_{\partial\Omega} \langle \nabla u, \nu \rangle \Big( 1 - \langle \nabla^2 u \, \nu , \nu \rangle \Big) \,d\mathcal H^{N-1} 
        \\
        &
        \leq - (k_{\min} + \beta) \int_{\partial\Omega} |\nabla^\tau u|^2 \,d\mathcal H^{N-1}   
        \leq 0, 
    \end{align*}
    where we have used the hypotheses $ \displaystyle \langle \nabla^2 u \, \nu , \nu \rangle = 1$ on $\partial \Omega$ and $\displaystyle \kappa_{\min} + \beta > 0$. Then, one concludes as in the last part of the proof of Theorem \ref{thm Serrin rigidity}.
\end{proof}

\subsection*{Acknowledgments}
This work has been partially supported by GNAMPA of INdAM. 

\noindent
Nunzia Gavitone was supported by the Project MUR PRIN-PNRR 2022: ``Linear and Nonlinear PDE'S: New directions and Applications”, P2022YFA. 

\noindent
Riccardo Molinarolo is supported by the Project ``Giochi a campo medio, trasporto e ottimizzazione in sistemi auto-organizzati e machine learning”, funded by MUR, D.D. 47/2025, PNRR - Missione 4, Componente 2, Investimento 1.2 - funded by European Union NextGenerationEU, CUP B33C25000380001.

\end{document}